\newtheorem{theorem}{Theorem}[section]
\newtheorem{corollary}[theorem]{Corollary}
\newtheorem{proposition}[theorem]{Proposition}
\newtheorem{lemma}[theorem]{Lemma}
\newtheorem{construction}[theorem]{Construction}
\definecolor{green}{HTML}{37cc57}
\definecolor{red}{HTML}{e81e5b}
\definecolor{blue}{HTML}{3d8bbf}
\DeclareMathOperator{\Aut}{Aut}
\DeclareMathOperator{\AGL}{AGL}
\DeclareMathOperator{\PSL}{PSL}
\DeclareMathOperator{\GL}{GL}
\DeclareMathOperator{\Sz}{Sz}
\DeclareMathOperator{\Cor}{Cor}
\DeclareMathOperator{\AG}{AG}
\DeclareMathOperator{\PGammaL}{P\Gamma L}
\DeclareMathOperator{\PG}{PG}
\DeclareMathOperator{\SL}{SL}
\DeclareMathOperator{\UH}{U_H}
\DeclareMathOperator{\PGammaU}{P \Gamma U}
\DeclareMathOperator{\AGammaL}{A \Gamma L}
\DeclareMathOperator{\PSU}{PSU}
\DeclareMathOperator{\Kv}{K}
\DeclareMathOperator{\Kn}{K_n}
\DeclareMathOperator{\K3}{K_3}
\DeclareMathOperator{\Alt}{A}
\DeclareMathOperator{\St}{S_3}
\DeclareMathOperator{\Ss}{S_6}
\DeclareMathOperator{\Sv}{S}
\DeclareMathOperator{\Sn}{S_n}
\DeclareMathOperator{\Mv}{M}
\DeclareMathOperator{\Mo}{M_{11}}
\DeclareMathOperator{\Mvd}{M_{22}}
\DeclareMathOperator{\ea}{ea}
\DeclareMathOperator{\Ct}{C_3}
\title{Geometries with trialities arising from linear spaces}
\author{Remi Delaby}
\address{Remi Delaby, Universit\'e Libre de Bruxelles, D\'epartement de Math\'ematique, C.P.216 - Alg\`ebre et Combinatoire, Boulevard du Triomphe, 1050 Brussels, Belgium.}
\email{remi.delaby@ulb.be}
\author{Dimitri Leemans}\thanks{This research was made possible thanks to an Action de Recherche Concert\'ee grant from the Communaut\'e Fran\c caise Wallonie-Bruxelles.}
\address{Dimitri Leemans, Universit\'e Libre de Bruxelles, D\'epartement de Math\'ematique, C.P.216 - Alg\`ebre et Combinatoire, Boulevard du Triomphe, 1050 Brussels, Belgium, Orcid number 0000-0002-4439-502X.}
\email{leemans.dimitri@ulb.be} 
\author{Philippe Tranchida}
\address{Philippe Tranchida, Max Planck Institute for Mathematics in the Sciences, Inselstrasse 22 D-04107 Leipzig, Orcid number 0000-0003-0744-4934.
}
\email{tranchida.philippe@gmail.com}
\date{\today}
\begin{document}

\maketitle

\begin{center}

\tikzset{node distance=2cm, every node/.style={circle,thin, fill=black}}

\begin{tikzpicture}[scale = 1]

\renewcommand*{\EdgeLineWidth}{1pt}

\node[fill, color=green, circle] (1) {};
\node[right of=1, fill, color=black, circle, scale = .2] (2) {};
\node[right of=2, fill, color=blue, circle] (3) {};
\node[below of=1, fill, color=black, circle, scale = .2] (4) {};
\node[below of=2, fill, color=black, circle, scale = .2] (5) {};
\node[below of=3, fill, color=black, circle, scale = .2] (6) {};
\node[below of=4, fill, color=black, circle, scale = .2] (7) {};
\node[below of=5, fill, color=red, circle] (8) {};
\node[below of=6, fill, color=black, circle, scale = .2] (9) {};
\draw[-]
(1) edge[blue,line width = 2] (2)
(2) edge[blue,line width = 2] (3)
(4) edge[black] (5)
(5) edge[black] (6)
(7) edge[black] (8)
(8) edge[black] (9)
(3) edge[black] (5)
(5) edge[black] (7)
(1) edge[black] (5)
(5) edge[black] (9)
(3) edge[black] (6)
(6) edge[black] (9)
(1) edge[black] (4)
(4) edge[black] (7)
(2) edge[black] (5)
(5) edge[black] (8)
(2) edge[black] (4)
(8) edge[green,line width = 2] (6)
(6) edge[black] (2)
(4) edge[red, line width = 2] (8)
(8) edge[red, line width = 2, out=315, in=315, looseness=2] (3)
(4) edge[black, out=225, in=225, looseness=2] (9)
(6) edge[green,line width = 2, out=45, in=45, looseness=2] (1)
(2) edge[black, out=135, in=135, looseness=2] (7);

\end{tikzpicture}

\end{center}

\begin{abstract}

A triality is a sort of super-symmetry that exchanges the types of the elements of an incidence geometry in cycles of length three. Although geometries with trialities exhibit fascinating behaviors, their construction is challenging, making them rare in the literature. To understand trialities more deeply, it is crucial to have a wide variety of examples at hand. In this article, we introduce a general method for constructing various rank-three incidence systems with trialities. Specifically, for any rank two incidence system $\Gamma$, we define its triangle complex $\Delta(\Gamma)$, a rank three incidence system whose elements consist of three copies of the flags (pairs of incident elements) of $\Gamma$. This triangle complex always admits a triality that cyclically permutes the three copies. We then explore in detail the properties of the triangle complex when $\Gamma$ is a linear space, including flag-transitivity, the existence of dualities, and connectivity properties. As a consequence of our work, this construction yields the first infinite family of thick, flag-transitive and residually connected geometries with trialities but no dualities.

\end{abstract}

\section{Introduction}

Incidence geometries are geometric objects formed of elements of different types together with an incidence relation between them. In the more classical examples, incidence geometries are formed of points, lines, faces, etc. Some incidence geometries allow for some type of super-symmetry that exchanges the role of their types. These types of symmetries are formally called correlations. The most classical example is the one of a duality or a polarity, which exchanges the role of points and hyperplanes in projective geometries or the role of points and faces in polyhedra. Higher order collineations have not been systematically studied. That being said, order three correlations, also called trialities did make some very noticeable appearances in the history of mathematics. 

The notion of triality appeared in papers of Study (see~\cite[Page 435]{Porteous}, see also~\cite{Study1} and~\cite{Study2}\footnote{See \url{http://neo-classical-physics.info/uploads/3/4/3/6/34363841/study-analytical_kinematics.pdf} 
for an english translation of~\cite{Study2}.}), where he used a quadric in a seven dimensional projective plane to describe physical motions. Cartan later gave the official name of triality~\cite{Cartan} to this phenomenon, and defined it in the broader context of Lie groups. This quadric was later investigated by Tits in \cite{tits1959trialite}. In \cite{trialityAbsolutes}, it is shown how to obtain interesting geometries from the absolute points and the moving lines of this same quadric under the actions of the triality. Freudenthal further explored triality in the context of Lie algebras \cite{Freudenthal}. The notion of triality also appears in the study of maps on surfaces, where a triality is obtained as a composition of the Dual operator and the Petrie dual operator (see \cite{abrams2022new}, \cite{jones2010maps}, \cite{wilson1979operators}). Recently, Leemans and Stokes \cite{LeemansStokes2019} showed how to construct coset geometries having trialities by using a group $G$ that has outer automorphisms of order three. They used this technique in~\cite{leemans2022incidence} to construct the first infinite family of coset geometries admitting trialities but no dualities. In \cite{trialitySuzuki}, the first infinite family of flag-transitive geometries admitting trialities but no dualities is given. The construction uses coset geometries arising from Suzuki groups $\Sz(q)$, where $q=2^{2e+1}$ with $e$ a positive integer and $2e+1$ is divisible by $3$. 
We refer to the introduction of \cite{trialitySuzuki} for a more detailed account of the history of trialities.

In this article, we present a new construction that produces a rank three geometry $\Delta(\Gamma)$ with trialities from any given rank two geometry $\Gamma$ (see Construction~\ref{construct}). The elements of this new geometry $\Delta(\Gamma)$ are three copies of the flags of $\Gamma$ and the trialities exchange cyclically these three copies. Since the chambers of $\Delta(\Gamma)$ correspond to triangles of $\Gamma$, we call $\Delta(\Gamma)$ the triangle complex of $\Gamma$. We then study in details the case where $\Gamma$ is a linear space. We show that $\Delta(\Gamma)$ is flag-transitive if and only if $\Gamma$ is transitive on its set of triple of non-collinear points (Theorem \ref{mastertheorem}). This prompts us to give a classification of such linear spaces. We achieve this in Theorem \ref{transitive}, using the previous classification of locally two-transitive linear spaces obtained in \cite{BDL2003}. We obtain the first new theorem of this paper which may be of interest to researchers studying linear spaces.

\begin{theorem}\label{transitive}
Assume $\Gamma$ is a flag-transitive linear space of $v$ points with a group $G$ acting transitively on the set $T(\Gamma)$ of ordered triples of non-collinear points of $\Gamma$. Then one of the following occurs:
\begin{enumerate}
\item $\Gamma=\PG(n,q)$, $v=\frac{q^{n+1}-1}{q-1}$, $\PSL(n+1,q)\trianglelefteq G \le \PGammaL(n+1,q)$ with $n\ge 2$.
\item $\Gamma=\PG(3,2)$, $v=15$ with $G\cong \Alt_7$. 
\item $\Gamma=\AG(n,q)$, $v=p^d=q^n$, $G=p^d:G_0$ with $\SL(n,q)\trianglelefteq G_0$, $q\ge 3$ and $n\ge 3$ or $\GL(n,q)\trianglelefteq G_0$, $q\ge 3$ and $n = 2$.
\item  $\Gamma$ is a hermitian unital $\UH(q)$, $v=q^3+1$, $G = \PGammaU(3,q)$ and $q=2$ or $4$.
\item $\Gamma$ is a circle and $G$ is a $3$-transitive permutation group:
     \begin{enumerate}
	\item $G=\Alt_v,G=\Sv_v$, $v\ge 3$.
	\item $G\trianglerighteq {\PSL}(2,q)$, $v=q+1$ and $G$ normalizes a sharply $3$-transitive permutation group.
	\item $G = \Mv_v$, $v=11,12,22,23,24$ or $G=\Aut(\Mvd)$, $v=22$.
	\item $G= \Mo$, $v=12$.
	\item $G= \Alt_7$, $v=15$.
        \item $G=\ea(2^n):G_{0}$, $v=2^n$, $G_0\trianglerighteq \SL(n,2)$ with $n\ge 2$.
	\item $G=\ea(2^4):\Alt_7$, $v=16$.
      \end{enumerate}
\end{enumerate}
The converse is also true. For any pair $(\Gamma,G)$ satisfying the conditions of one of the cases given above, the action of $G$ is transitive on $T(\Gamma)$.
\end{theorem}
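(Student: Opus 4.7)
The plan is to reduce the hypothesis to a classification already in the literature and then sieve that classification against the stronger condition of transitivity on $T(\Gamma)$.

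First, I would observe that transitivity of $G$ on $T(\Gamma)$ implies that for any point $p$, the stabilizer $G_p$ acts transitively on ordered pairs $(q,r)$ of distinct points with $p,q,r$ non-collinear. Given two distinct lines $L_1, L_2$ through $p$, any choice of $q \in L_1\setminus\{p\}$, $r \in L_2\setminus\{p\}$ produces a non-collinear triple $(p,q,r)$, so $G_p$ is in particular $2$-transitive on the pencil of lines through $p$. Combined with flag-transitivity, this makes $(\Gamma, G)$ a flag-transitive, locally $2$-transitive linear space, and hence $(\Gamma,G)$ appears in the classification of \cite{BDL2003}.

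Second, for each family in that classification I would check whether the stronger hypothesis on triples holds. For $\PG(n,q)$ with $n\ge 2$ and $\PSL(n+1,q)\trianglelefteq G$, the transitivity of $\PSL$ on ordered frames of any projective plane gives transitivity on ordered non-collinear triples immediately, and the exceptional action of $\Alt_7$ on $\PG(3,2)$ is verified by direct inspection. For $\AG(n,q)$, translating so that one of the three points sits at the origin reduces the condition to transitivity of the point stabilizer $G_0$ on ordered pairs of linearly independent vectors of $\FF_q^n$; this yields the familiar bifurcation $\SL(n,q)\trianglelefteq G_0$ if $n\ge 3$ and $\GL(n,q)\trianglelefteq G_0$ if $n=2$, and it rules out the small characteristic sporadic point-stabilizers of \cite{BDL2003} as well as $q=2$ (where $\AG(n,2)$ has lines of size $2$ and is absorbed into the circle case). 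For the Hermitian unital $\UH(q)$ one has to count $\PGammaU(3,q)$-orbits on ordered non-collinear triples; this number grows with $q$, and only $q=2$ and $q=4$ give a single orbit, which produces exactly case (4).

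Third, when every line of $\Gamma$ has size $2$ (the ``circles'') every triple of distinct points is automatically non-collinear, so transitivity on $T(\Gamma)$ is equivalent to $3$-transitivity of $G$ on the point set. Case (5) is thus exactly the CFSG-based classification of finite $3$-transitive permutation groups, and the list (5)(a)--(g) is the standard one. For the converse direction, for each case listed in the theorem the fact that $G$ genuinely acts transitively on $T(\Gamma)$ follows from the same orbit analysis used in the forward direction, or simply from $3$-transitivity in the circle case.

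The main obstacle I expect is the Hermitian unital case: identifying precisely the $q$ for which $\PGammaU(3,q)$ has a single orbit on ordered non-collinear triples of $\UH(q)$ requires an explicit enumeration of the triangle types of the unital and a careful analysis of how field automorphisms fuse them; this is where the sharp cutoff $q\le 4$ emerges, and it is the one place in the proof that genuinely depends on small-case combinatorics rather than general facts about classical groups and the \cite{BDL2003} classification.
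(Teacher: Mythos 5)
Your proposal follows essentially the same route as the paper: show that transitivity on $T(\Gamma)$ forces $(\Gamma,G)$ to be flag-transitive and $(2T)_1$, invoke the classification of \cite{BDL2003}, and then sieve each case by comparing $|G|$ with $|T(\Gamma)|$ or by direct orbit arguments, with the circle case handled by noting that every triple is non-collinear so the condition is exactly $3$-transitivity. The one place you overestimate the difficulty is the Hermitian unital: the paper does not enumerate triangle types but simply compares $|T(\Gamma)|=(q^3+1)q^3(q^3-q)$ with $|\PGammaU(3,q)|=(q^3+1)q^3(q^2-1)\cdot 2e$, which forces $q\le 2e$ and hence $q\in\{2,4\}$.
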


We then investigate which of the geometries $\Gamma$ appearing in the classification theorem above yields a firm, residually connected and flag-transitive geometry $\Delta(\Gamma)$.
We obtain the following classification theorem.
\begin{theorem}\label{classification}
Assume $\Gamma$ is a linear space of $v$ points. The geometry $\Delta(\Gamma)$ obtained from Construction~\ref{construct} is firm, residually connected and flag-transitive if and only if $\Gamma$ is one of the following:
\begin{enumerate}
\item A projective plane $\PG(2,q)$ with $q\geq 2$ and $v=q^2+q+1$;
\item An affine plane $\AG(2,q)$ with $q\geq 3$ and $v=q^2$;
\item A hermitian unital $\UH(q)$ with $q=2$ or $4$ and $v=q^3+1$.
\end{enumerate}
Moreover, in each of these cases, $\Delta(\Gamma)$ admits dualities if and only $\Gamma$ does.
\end{theorem}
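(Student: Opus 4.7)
The plan is to combine Theorem~\ref{mastertheorem}, which equates flag-transitivity of $\Delta(\Gamma)$ with transitivity of $\Aut(\Gamma)$ on non-collinear triples, with the classification of Theorem~\ref{transitive}, and then to check, for each pair $(\Gamma,G)$ produced by that classification, whether $\Delta(\Gamma)$ is also firm and residually connected. The three families retained in the statement will turn out to be exactly those that pass both tests, and the duality assertion is then proved by a direct correspondence between point-line involutions of $\Gamma$ and correlations of $\Delta(\Gamma)$ swapping two of the three copies of flags.

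I first dispose of the cases that must be eliminated. In case~$(5)$ of Theorem~\ref{transitive} every line of $\Gamma$ has two points, so a flag of $\Gamma$ is a directed edge and a rank-$2$ flag of $\Delta(\Gamma)$ encodes a directed path $p\to q\to r$; this admits a unique completion to the triangle $\{p,q,r\}$, so the flag lies in exactly one chamber of $\Delta(\Gamma)$ and firmness fails. The higher-dimensional cases $\PG(n,q)$ with $n\geq 3$ and $\AG(n,q)$ with $n\geq 3$ still satisfy firmness but fail residual connectedness: fixing an element $(p,\ell)$ of type $0$, any type-$2$ element in its residue adjacent to the type-$1$ element $(p_2,\ell_{23})$ corresponds to a point of $\ell_{23}$ together with the unique line through $p$ and that point, so both live in the plane spanned by $p$ and $\ell_{23}$. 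A short induction then shows that the connected component of $(p_2,\ell_{23})$ in the residue is contained in the sub-geometry determined by that plane, and since several planes through $\ell$ exist as soon as $n\geq 3$, the residue decomposes into several connected components.

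For the three surviving families --- $\PG(2,q)$, $\AG(2,q)$ with $q\geq 3$, and $\UH(q)$ with $q\in\{2,4\}$ --- every line has at least three points, so firmness holds by a direct count, and residual connectedness reduces via flag-transitivity to checking one representative residue of each rank. Because $\Gamma$ is now two-dimensional, the obstruction of the previous paragraph disappears (there is only a single ``plane'', namely $\Gamma$ itself), and the resulting rank-$2$ residues are easily seen to be connected. The unital cases $q=2,4$ demand slightly more care because their block structure is less uniform than that of a plane, but the relevant connectedness can be verified directly from the explicit description of $\UH(2)$ and $\UH(4)$.

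Finally, for the duality statement, given a point-line involution $\delta$ of $\Gamma$ the map on flags $(p,\ell)\mapsto(\delta(\ell),\delta(p))$ is itself involutive, and Construction~\ref{construct} shows that applying it across the two copies to be swapped yields an involutive correlation of $\Delta(\Gamma)$ fixing one type and interchanging the other two. Conversely, a duality of $\Delta(\Gamma)$ fixes one copy of the flags and swaps the other two; its restriction, read through Construction~\ref{construct}, delivers a permutation of the flags of $\Gamma$ that must be induced by a point-line swap, i.e.\ a duality of $\Gamma$. The hardest step of the plan is the residual-connectedness analysis in both directions --- ruling out indirect connecting chains in the eliminated higher-dimensional cases, and verifying connectedness of the rank-$2$ residues for the unitals $\UH(2)$ and $\UH(4)$; the duality step is then a bookkeeping exercise tracking how the triality permutes the three copies.
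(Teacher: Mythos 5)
Your overall strategy is exactly the paper's: combine Theorem~\ref{mastertheorem} with Theorem~\ref{transitive}, then sieve the resulting list by firmness and residual connectedness, and transfer dualities via Proposition~\ref{duality}. Your elimination of the higher-dimensional $\AG(n,q)$ and $\PG(n,q)$ by the ``one component per plane through $L_1$'' argument is literally the paper's proof, and your elimination of the circles via firmness (a rank-two flag of $\Delta(\Gamma)$ determines its completing triangle uniquely when lines have two points) is correct and is a legitimate alternative to the paper's route, which instead observes that the rank-two residues are perfect matchings and hence disconnected.

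There are, however, two places where you assert precisely the thing that needs proving. First, ``the resulting rank-$2$ residues are easily seen to be connected'' does not follow from the disappearance of the multiple-planes obstruction: absence of one obstruction is not connectedness. In the affine case a second obstruction appears --- two type-$2$ elements $(p_2,L_2,2)$, $(p_2',L_2',2)$ in the residue of $(p_1,L_1,1)$ with $\big[p_2,p_2'\big]$ parallel to $L_1$ cannot be joined by the obvious length-$2$ chain, and one must construct an explicit length-$4$ detour; moreover for $\AG(2,3)$ even the detour used elsewhere degenerates and the residue must be checked by hand (it is a hexagon). This is where the bulk of the paper's Section on residual connectedness lives, and your plan should not present it as a formality. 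Second, in the converse duality direction, the claim that the restriction of a duality of $\Delta(\Gamma)$ ``must be induced by a point-line swap'' of $\Gamma$ is exactly the nontrivial content: one must rerun the counting arguments of Lemmas~\ref{welldef} and~\ref{bijection} and Proposition~\ref{masterproposition} in the duality setting to show the correlation carries pencils of lines to collinear point sets coherently across the three copies. The paper explicitly warns that this kind of ``geometric behaviour'' of abstract automorphisms is tempting to assume but has to be proved; your proposal assumes it. With those two steps actually carried out, your argument closes and coincides with the paper's.
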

In particular, applying our construction to finite affine planes results in an infinite family of flag-transitive, residually connected and thick geometries with trialities but no dualities. While examples of infinite families of geometries with trialities but no dualities were given \cite{LeemansStokes2019} and \cite{trialitySuzuki}, in both cases, the geometries are constructed as coset geometries. This means that the elements of these geometries are defined as coset of some systems $(G_i)_{i\in I}$ of subgroups of some group $G$, and do not have simple geometric interpretation. Moreover, the existence of a triality and the lack of dualities for these geometries also relies on group theoretical argument. Indeed, the triality is induced by an outer automorphism of $G$ of order three, and the absence of outer automorphisms of order two in $G$ implies the impossibility for the geometry to have dualities. In our case, the definition of $\Delta(\Gamma)$, the existence of the triality and the lack of dualities are all based on geometric constructions and arguments.

The geometry $\Delta(\AG(2,3))$ is the smallest geometry with trialities but no dualities arising from our construction. It is a geometry of rank three with 108 elements, 36 of each type. It has $432$ maximal flags and can be considered as an orientable proper hypermap on a surface of genus 55. This hypermap is the hypermap RPH55.89 on Marston Conder's list of orientable proper hypermaps\footnote{
See \url{https://www.math.auckland.ac.nz/~conder/OrientableProperHypermaps101.txt}
}.

The paper is organized as follows.
In Section~\ref{sec:prelim}, we give the basic notions needed to understand the paper.
In Section~\ref{sec:triangle}, we give the construction of the triangle complex of a rank two incidence system and we study the link between automorphisms and correlations of $\Gamma$ and those of $\Delta(\Gamma)$ in details.
In Section~\ref{sec:linear}, we prove that a linear space $\Gamma$ yields a flag-transitive triangle complex $\Delta(\Gamma)$ if and only if there exists $G\leq Aut(\Gamma)$ that acts transitively on the set of ordered triples of non-collinear points of $\Gamma$. This leads us to prove Theorem~\ref{transitive} which classifies the linear spaces $\Gamma$ that are such that their triangle complex $\Delta(\Gamma)$ is flag-transitive.
In Section~\ref{sec:rc}, we study the connectedness and residual connectedness of $\Delta(\Gamma)$, ultimately allowing us to prove Theorem~\ref{classification}.
In Section~\ref{sec:diagrams}, we give the Buekenhout diagrams of the triangle complexes obtained in Theorem~\ref{classification}. Finally, in Section~\ref{sec:conclusion}, we give some concluding remarks and suggest some future work.

\section{Preliminaries}\label{sec:prelim}

Geometric structures are composed of objects and a relation that specifies how these objects are related to one another. The notion of incidence system formalizes this concept by providing an abstract framework to study these configurations. We refer to~\cite{BC2013} for an introduction to this subject and more generally to the subject of diagram geometry.

Let $I$ be a finite non empty set.
    A triple $\Gamma = (X,\star,t)$ is called an \textit{incidence system} over $I$ if
    \begin{enumerate}
        \item $X$ is a non empty set whose elements are called the \textit{elements} of $\Gamma$;
        \item $\star$ is a symmetric and reflexive relation on $X$. It is called the \textit{incidence relation} of $\Gamma$;
        \item $t$ is a map from $X$ to $I$, called the \textit{type map} of $\Gamma$, such that distinct elements $x,y \in X$ with $x \star y$ satisfy $t(x) \neq t(y)$. Elements of $t^{-1}(i)$ are called elements of type $i$.
    \end{enumerate}

The \textit{rank} of $\Gamma$ is the cardinality of the type set $I$.
A \textit{flag} in an incidence system $\Gamma$ over $I$ is a set of pairwise incident elements. The type of a flag $F$ is $t(F)$, that is the set of types of the elements of $F.$ A \textit{chamber} is a flag of type $I$. An incidence system $\Gamma$ is an \textit{incidence geometry} if all its maximal flags are chambers.

Let $F$ be a flag of $\Gamma$. An element $x\in X$ is {\em incident} to $F$ if $x\star y$ for all $y\in F$. The \textit{residue} of $\Gamma$ with respect to $F$, denoted by $\Gamma_F$, is the incidence system formed by all the elements of $\Gamma$ incident to $F$ but not in $F$. The \textit{rank} of a residue is equal to rank$(\Gamma)$ - $|F|$. For an element $x\in X$, we denote by $\text{Res}_{\Gamma}(x)$ the set of elements of $\Gamma_{\{x\}}$.

A geometry $\Gamma$ is \textit{firm} (respectively, \textit{thick}) if every flag of type other than $I$ is contained in at least two (respectively, three) distinct chambers of $\Gamma$. It is called \textit{thin} if every flag of type $I \backslash \{i\}$ for some $i \in I$ is contained in exactly two chambers of $\Gamma$.

The \textit{incidence graph} of $\Gamma$ is a graph with vertex set $X$ and where two elements $x$ and $y$ are connected by an edge if and only if $x \star y$ and $t(x) \neq t(y)$. Whenever we talk about the distance between two elements $x$ and $y$ of a geometry $\Gamma$, we mean the distance in the incidence graph of $\Gamma$ and simply denote it by $d_\Gamma(x,y)$, or even $d(x,y)$ if the context allows.
The geometry $\Gamma$ is \textit{residually connected} when the incidence graphs of all of its residues of rank at least $2$ are connected.

Let $\Gamma = \Gamma(X,\star,t)$ be an incidence system over the type set $I$. A correlation of $\Gamma$ is a bijection $\phi$ of $X$ respecting the incidence relation $\star$ and such that, for every $x,y \in X$, if $t(x) = t(y)$ then $t(\phi(x)) = t(\phi(y))$. If, moreover, $\phi$ fixes the types of every element (i.e $t(\phi(x)) = t(x)$ for all $x \in X$), then $\phi$ is said to be an automorphism of $\Gamma$. The \emph{type} of a correlation $\phi$ is the permutation it induces on the type set $I$. A correlation of type $(i,j)$ is called a {\em duality} if it has order $2$. A correlation of type $(i,j,k)$ is called a {\em triality} if it has order $3$. The group of all correlations of $\Gamma$ is denoted by $\Cor(\Gamma)$ and the automorphism group of $\Gamma$ is denoted by $\Aut(\Gamma)$. Remark that $\Aut(\Gamma)$ is a normal subgroup of $\Cor(\Gamma)$ since it is the kernel of the action of $\Cor(\Gamma)$ on $I$.

A geometry $\Gamma$ such that the action of $\Aut(\Gamma)$ on the set of chambers if transitive is called \textit{flag-transitive}. Let $G \leq \Aut(\Gamma)$. The pair $(\Gamma,G)$ is called $(2T)_1$ if for every flag $F$ of rank $|I|-1$, the stabilizer of $F$ in $G$ acts two-transitively on the elements of the residue $\Gamma_F$ ~\cite{srzk2}.

Francis Buekenhout introduced a diagram associated to incidence geometries ~\cite{buekenhout2013diagram}. His idea was to associate to each rank two residue a set of three integers giving information on its incidence graph.
Let $\Gamma$ be a rank $2$ geometry. We can consider $\Gamma$ to have type set $I = \{P,L\}$, where $P$ and $L$ stand for points and lines. The {\em point-diameter}, denoted by $d_P(\Gamma) = d_P$, is the largest integer $k$ such that there exists a point $p \in P$ and an element $x \in \Gamma$ with $d(p,x) = k$. Similarly the {\em line-diameter}, denoted by $d_L(\Gamma) = d_L$, is the largest integer $k$ such that there exists a line $l \in L$ and an element $x \in \Gamma$ with $d(l,x) = k$. Finally, the \textit{gonality} of $\Gamma$, denoted by $g(\Gamma) = g$ is half the length of the smallest circuit in the incidence graph of $\Gamma$.

If a rank $2$ geometry $\Gamma$ has $d_P = d_L = g = n$ for some natural number $n$, we say that it is a \textit{generalized $n$-gon}. Generalized $2$-gons are also called generalized digons. They are in some sense trivial geometries since all points are incident to all lines. Their incidence graphs are complete bipartite graphs. Generalized $3$-gons are projective planes.

Consider a rank $2$ geometry $\Gamma$ over $I = \{P,L\}$ such that the following axioms are fulfilled.
\begin{enumerate}
    \item Every line is incident to at least two points.
    \item Every point is incident to at least two lines.
    \item Every pair of distinct points $p,q$ is incident to one and only one line.
\end{enumerate}
Such an incidence geometry $\Gamma = (\mathcal{P} \sqcup \mathcal{L},\star,t ) $ over $I = \{P,L\}$ is called a \textit{linear space} where $t$ is defined in the obvious way (see \cite[Chapter 1, Section 2.5]{Handbook}). We decided to add axiom (2) to the definition of \cite[Chapter 1, Section 2.5]{Handbook} in order to avoid the degenerate case where  $\Gamma$ would have a unique line.

Let $\Gamma$ be a geometry over $I$.  The \textit{Buekenhout diagram} (or diagram for short) $D$ for $\Gamma$ is a graph whose vertex set is $I$. Each edge $\{i,j\}$ is labeled with a collection $D_{ij}$ of rank $2$ geometries. We say that $\Gamma$ belongs to $D$ if every residue of rank $2$ of type $\{i,j\}$ of $\Gamma$ is one of those listed in $D_{ij}$ for every pair of $i \neq j \in I$. In most cases, we use conventions to turn a diagram $D$ into a labeled graph. The most common convention is to not draw an edge between two vertices $i$ and $j$ if all residues of type $\{i,j\}$ are generalized digons, and to label the edge $\{i,j\}$ by a natural integer $n$ if all residues of type $\{i,j\}$ are generalized $n$-gons. It is also common to omit the label when $n=3$.
If the edge $\{i,j\}$ is labeled by a triple $(d_{ij},g_{ij},d_{ji})$ it means that every residue of type $\{i,j\}$ had $d_P = d_{ij}, g = g_{ij}, d_L = d_{ji}$. We can also add information to the vertices of a diagram. 
We can label the vertex $i$ with the number $n_i$ of elements of type $i$ in $\Gamma$. Moreover, if for all flags $F$ of co-type $i$, we have that $|\Gamma_F| = s_i +1$, we will also label the vertex $i$ with the integer $s_i$.

\section{The triangle complex of a rank two geometry}\label{sec:triangle}

In this section, we start by defining the main construction of this article, that is, the triangle complex $\Delta(\Gamma)$ of a rank $2$ incidence system $\Gamma$. We then study in details the geometry $\Delta(\Gamma)$ when $\Gamma$ is a thick linear space. First, we pay attention to the automorphisms and correlations of $\Delta(\Gamma)$. It turns out that, except for the trialities, they are all induced by automorphisms or correlations of $\Gamma$. Finally, we investigate separately the case of linear spaces with two points on each lines, also known as circles. 

\begin{construction}\label{construct}
Let $\Gamma = (\mathcal{P} \sqcup \mathcal{L},\star,t)$ 
be a rank two geometry. The {\em triangle complex} $\Delta(\Gamma) = (X_{\Delta(\Gamma)},\star_{\Delta(\Gamma)},t_{\Delta(\Gamma)})$ over $I = \{1,2,3\}$ is the rank three incidence system constructed from $\Gamma$ in the following way: 

\begin{enumerate}
    \item The set $X_{\Delta(\Gamma)}$ of elements of $\Delta(\Gamma)$ is the set of all the triples $(p,L,i)$ with $p \in \mathcal P$, $L \in \mathcal L$, $i \in \{ 1,2,3 \}$ satisfying $p \star L$.
    \item The incidence relation $\star_{\Delta(\Gamma)}$ is defined by $(p,L,i) \star_{\Delta(\Gamma)} (p',L',i \bmod{3} +1)$ if and only if $\text{Res}_\Gamma(L') \cap \text{Res}_\Gamma(L) = \{ p \}$ and $p \neq p'$. 
    \item The type function $t_{\Delta(\Gamma)} \colon X_{\Delta(\Gamma)} \mapsto \{ 1, 2 , 3 \}$ is defined by $t_{\Delta(\Gamma)} ( (p,L,i) ) = i$.
\end{enumerate}    
\end{construction}

Simply put, the elements of $\Delta(\Gamma)$ are three disjoint copies of the flags of $\Gamma$. With the incidence relation defined above, it is easy to see that chambers of $\Delta(\Gamma)$ correspond to triangles in $\Gamma$. This motivates the name triangle complex. Before proceeding, we prove a few elementary results about $\Delta(\Gamma)$.

We first give a necessary condition for the incidence system $\Delta(\Gamma)$ to be a geometry. Note that from now on, we will call the elements of $\mathcal{P}$ the points of $\Gamma$ and the elements of $\mathcal{L}$ the lines of $\Gamma$.

\begin{proposition}\label{g3impliesgeom}
    Let $\Gamma = (\mathcal{P} \sqcup \mathcal{L},\star,t)$ be a rank two geometry. If $\Delta(\Gamma)$ is a geometry, then the gonality of $\Gamma$ is at most three.
\end{proposition}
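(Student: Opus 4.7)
The plan is to assume $\Delta(\Gamma)$ is a geometry and extract from a single chamber of $\Delta(\Gamma)$ a closed walk of length six in the incidence graph of $\Gamma$. Since $\Gamma$ is a rank two geometry, it has at least one incident pair $(p,L)$ with $p\in\mathcal P$ and $L\in\mathcal L$, so $(p,L,1)$ is an element of $\Delta(\Gamma)$. As a singleton flag it extends to a maximal flag, and because $\Delta(\Gamma)$ is a geometry this maximal flag must be a chamber, which I may write as $\{(p,L,1),(p_2,L_2,2),(p_3,L_3,3)\}$.

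Unpacking the three pairwise incidences using Construction~\ref{construct}, I would read off
\[
\text{Res}_\Gamma(L)\cap\text{Res}_\Gamma(L_2)=\{p\},\quad \text{Res}_\Gamma(L_2)\cap\text{Res}_\Gamma(L_3)=\{p_2\},\quad \text{Res}_\Gamma(L_3)\cap\text{Res}_\Gamma(L)=\{p_3\},
\]
together with the inequalities $p\neq p_2$, $p_2\neq p_3$, $p_3\neq p$. In particular $\Gamma$ satisfies the six incidences $p\star L$, $p\star L_2$, $p_2\star L_2$, $p_2\star L_3$, $p_3\star L_3$ and $p_3\star L$, which arrange into a closed walk
\[
p \; - \; L_2 \; - \; p_2 \; - \; L_3 \; - \; p_3 \; - \; L \; - \; p
\]
of length six in the incidence graph of $\Gamma$. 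Any closed walk contains a circuit of no greater length, so $g(\Gamma)\le 3$.

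The only point requiring a little care is checking that the walk really yields a short circuit rather than collapsing to a trivial one. The three points are pairwise distinct by the inequalities above, so any repetition of vertices along the walk must come from a coincidence between two of the lines $L,L_2,L_3$. Each such coincidence immediately isolates a closed subwalk of length four between two of the distinct points and two lines, producing a four-circuit and giving the even sharper bound $g(\Gamma)\le 2$. If instead the three lines are pairwise distinct, the displayed walk is itself a simple six-circuit. In every case the gonality of $\Gamma$ is at most three, which is the desired conclusion.
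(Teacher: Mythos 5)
Your proof is correct and follows essentially the same route as the paper: complete the flag $\{(p,L,1)\}$ to a chamber of $\Delta(\Gamma)$ and read off the closed incidence walk $p,L_2,p_2,L_3,p_3,L,p$ of length six, whence $g(\Gamma)\le 3$. Your extra care about degeneracy (which the paper skips) is welcome but in fact vacuous: the condition $\text{Res}_\Gamma(L')\cap\text{Res}_\Gamma(L)=\{\cdot\}$ in the incidence relation forces the three lines to be pairwise distinct, so the walk is always a genuine six-circuit.
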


\begin{proof}
    Let $p_1 \in \mathcal P$ be any point of $\Gamma$. Since $\Gamma$ is a geometry there must exist a line $L_1 \in \mathcal L$ such that $\{ p_1,L_1 \}$ is a flag in $\Gamma$. Thus, the triple $(p_1,L_1,1)$ is an element of $\Delta(\Gamma)$. If $\Delta(\Gamma)$ is a geometry, the flag $\{(p_1,L_1,1)\}$ can be completed in a chamber $\{ (p_1,L_1,1) , (p_2,L_2,2) , (p_3,L_3,3) \} $.
    Notice that this implies that $p_1,L_2,p_2,L_3,p_3,L_1,p_1$ is an incidence chain in $\Gamma$, so that the gonality of $\Gamma$ is at most three as claimed. 
\end{proof}

The previous proposition motivates us to look at linear spaces.
Indeed, linear spaces are rank two geometries with gonality three. They are thus a potential good source of inputs for Construction~\ref{construct}. 
\begin{proposition}
    If $\Gamma$ is a linear space, the incidence system $\Delta(\Gamma)$ is an incidence geometry.
\end{proposition}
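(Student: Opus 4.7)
The plan is to verify directly that every maximal flag in $\Delta(\Gamma)$ has type $\{1,2,3\}$, which amounts to showing that every flag of rank $1$ or $2$ can be extended to a chamber. I would first unfold what a chamber looks like: by chasing the three pairwise incidences, a chamber $\{(p_1,L_1,1),(p_2,L_2,2),(p_3,L_3,3)\}$ corresponds exactly to three distinct non-collinear points $p_1,p_2,p_3$ of $\Gamma$, with $L_1$ the unique line through $p_3$ and $p_1$, $L_2$ the unique line through $p_1$ and $p_2$, and $L_3$ the unique line through $p_2$ and $p_3$. This uses both the axiom that two distinct points determine a unique line and the fact that the incidence condition forces $L_i\neq L_j$ for $i\neq j$, so the three points cannot lie on a common line. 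The problem is thereby reduced to producing triangles in $\Gamma$ with prescribed partial data.

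Next, I would dispatch the rank-$1$ case. Take an element $(p_1,L_1,1)$ (the other two types are handled identically by the cyclic symmetry of Construction~\ref{construct}). By the first linear-space axiom, $L_1$ contains a second point $p_3\neq p_1$; by the second axiom, $p_1$ lies on some line $L'\neq L_1$, and $L'$ contains a point $p_2\neq p_1$. Since the unique line through $p_1$ and $p_2$ is $L'\neq L_1$, the point $p_2$ does not lie on $L_1$, so $p_1,p_2,p_3$ are non-collinear, and the chamber described in the preceding paragraph contains $(p_1,L_1,1)$.

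For rank $2$, I would again use cyclic symmetry and treat only, say, a flag $F=\{(p_1,L_1,1),(p_2,L_2,2)\}$. The incidence condition forces $L_1\neq L_2$ with $L_1\cap L_2=\{p_1\}$, $p_2\in L_2$, $p_2\neq p_1$. I would pick any $p_3\in L_1\setminus\{p_1\}$ (possible by the first axiom), define $L_3$ to be the unique line through $p_2$ and $p_3$, and check that $(p_3,L_3,3)$ is incident to both elements of $F$. The point $p_3$ is not on $L_2$, since otherwise $p_3\in L_1\cap L_2=\{p_1\}$, contradicting $p_3\neq p_1$; hence $L_3\neq L_2$ and these two lines meet only at $p_2$. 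Symmetrically, $p_2\notin L_1$ gives $L_3\neq L_1$ with $L_1\cap L_3=\{p_3\}$, so $F\cup\{(p_3,L_3,3)\}$ is a chamber.

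The argument is essentially bookkeeping, and the only thing to be careful about is the systematic use of the three defining features of a linear space — a second point on each line, a second line through each point, and uniqueness of the line through two distinct points — to guarantee that each claimed intersection reduces to a singleton. I do not expect any deeper obstacle: Construction~\ref{construct} is set up precisely so that triangles in $\Gamma$ are in bijection with chambers of $\Delta(\Gamma)$, and the linear-space axioms are exactly what is required to realize enough such triangles.
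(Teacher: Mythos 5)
Your proof is correct and follows the same route as the paper, which simply observes that the linear-space axioms (a second point on each line, a second line through each point, uniqueness of joins) let any flag be completed to a chamber; you have merely written out the bookkeeping that the paper leaves implicit. The identification of chambers with ordered triples of non-collinear points and the use of the canonical triality to reduce to one flag type per rank are both sound.
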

\begin{proof}
    It suffices to observe that, by definition, a linear space has always at least two lines and all its lines have at least two points. Hence any flag of $\Delta(\Gamma)$ can be completed in a chamber. 
\end{proof}

\begin{proposition}\label{triality} 
    Let $\Gamma$ be a rank $2$ geometry. The triangle complex $\Delta(\Gamma)$ admits trialities.
\end{proposition}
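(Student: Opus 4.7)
The plan is to exhibit an explicit triality as the cyclic shift of the index coordinate. Concretely, I would define a map $\tau \colon X_{\Delta(\Gamma)} \to X_{\Delta(\Gamma)}$ by
\[
\tau\bigl((p,L,i)\bigr) \;=\; (p,L,\, i \bmod 3 + 1),
\]
which leaves the flag $(p,L)$ of $\Gamma$ alone and only rotates the index in $\{1,2,3\}$. This map is clearly a bijection on $X_{\Delta(\Gamma)}$ of order $3$, and by construction it satisfies $t_{\Delta(\Gamma)}(\tau(x)) = t_{\Delta(\Gamma)}(x) \bmod 3 + 1$, so $\tau$ induces the $3$-cycle $(1\,2\,3)$ on the type set $I$.

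The only nontrivial step is to check that $\tau$ preserves the incidence relation $\star_{\Delta(\Gamma)}$. I would argue as follows: the defining condition for $(p,L,i) \star_{\Delta(\Gamma)} (p',L',i \bmod 3 + 1)$, namely
\[
\mathrm{Res}_\Gamma(L) \cap \mathrm{Res}_\Gamma(L') = \{p\} \ \text{and}\ p \neq p',
\]
involves only the pairs $(p,L)$ and $(p',L')$, together with the requirement that $p$ be read off the element whose index comes first in the cyclic order. Applying $\tau$ to both elements replaces $(p,L,i)$ and $(p',L',i \bmod 3 + 1)$ by $(p,L,i \bmod 3 + 1)$ and $(p',L',(i \bmod 3 + 1) \bmod 3 + 1)$: the cyclic ordering of the indices is preserved, and the points attached to each element are unchanged, so exactly the same condition on $(p,L,p',L')$ characterizes incidence of the image pair. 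Hence $\tau$ sends incident pairs to incident pairs, and by the same argument (or by invoking $\tau^{-1} = \tau^{2}$) it reflects incidence as well.

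Combining these observations, $\tau$ is a type-permuting bijection of $\Delta(\Gamma)$ that preserves $\star_{\Delta(\Gamma)}$, hence a correlation; its order is $3$ and the induced permutation of $I$ is the $3$-cycle $(1\,2\,3)$, so $\tau$ is a triality. I do not anticipate any real obstacle here: the construction of $\Delta(\Gamma)$ was designed with this cyclic symmetry in mind, and the only point requiring care is confirming that the incidence condition is phrased in a manner that is genuinely invariant under the index shift, which the formulation of Construction~\ref{construct} ensures.
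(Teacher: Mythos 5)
Your proposal is correct and coincides with the paper's proof, which simply exhibits the same map $\tau((p,L,i)) = (p,L,i \bmod 3 + 1)$ and asserts it is a triality; you have merely spelled out the routine verification that the paper leaves implicit.
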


\begin{proof}
    The application $\tau \colon \Delta(\Gamma) \rightarrow \Delta(\Gamma)$ defined by $\tau( (p,L,i) ) = (p,L,i \bmod{3} +1 )$ is a triality.
\end{proof}    

We call the triality $\tau$ above the {\em canonical triality} of $\Delta(\Gamma)$.

\subsection{A criterion for the flag-transitivity of $\Delta(\Gamma)$}\label{sec:FT}

We now start our investigation of automorphisms and correlations of $\Delta(\Gamma)$. The main technicality is to show that an automorphism $\phi \in \Aut(\Delta(\Gamma))$ acts on $\Delta(\Gamma)$ in a geometric way. For example, we will show that the images by $\phi$ of two elements $(p,L_1,i)$ and $(p,L_2,i)$ of $\Delta(\Gamma)$ that share a common point $p$ must still be two elements sharing a common point. While it is very tempting to assume such a behavior, it has to be carefully proved. This is achieved in Lemmas \ref{welldef} and \ref{bijection}.

Let $\Gamma = (\mathcal{P} \sqcup \mathcal{L},\star,t)$ be a linear space and let $\Delta(\Gamma)$ be the geometry constructed from $\Gamma$ using Construction~\ref{construct}. Given two points $p,q \in \mathcal P$ with $p \neq q$ we define the line $L := \big[ p,q \big]$ to be the only element of $\mathcal L$ incident to both $p$ and $q$.


\begin{lemma}\label{lem:m and n}
    Let $\Gamma = (\mathcal{P} \sqcup \mathcal{L},\star,t)$ be a finite linear space such that $\Delta(\Gamma)$ is flag transitive. Then every line of $\Gamma$ is incident to the same number of points and every point of $\Gamma$ is incident to the same number of lines.
\end{lemma}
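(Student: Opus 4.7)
The plan is to exploit the flag-transitivity of $\Delta(\Gamma)$ at the level of rank-two residues. First I would unpack Construction~\ref{construct} to observe that a chamber of $\Delta(\Gamma)$ is determined by an ordered non-collinear triple of points $(p_1,p_2,p_3)$ of $\Gamma$, with the three lines of the chamber being $L_1=[p_1,p_3]$, $L_2=[p_1,p_2]$ and $L_3=[p_2,p_3]$. This dictionary between chambers of $\Delta(\Gamma)$ and triangles of $\Gamma$ is the main bridge between the two languages.

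Next, I would fix a rank-two flag $F=\{(p_1,L_1,1),(p_2,L_2,2)\}$ of $\Delta(\Gamma)$. The incidence relation $\star_{\Delta(\Gamma)}$ forces $L_1\cap L_2=\{p_1\}$ and $p_2\in L_2\setminus\{p_1\}$. To count the chambers containing $F$, I would look for type-$3$ elements $(p_3,L_3,3)$ incident to both elements of $F$: the incidence with $(p_1,L_1,1)$ forces $p_3\in L_1\setminus\{p_1\}$, and then $L_3=[p_2,p_3]$ is uniquely determined. A quick check, using that two distinct lines of a linear space meet in at most one point, shows that every such $p_3$ yields a valid chamber. Hence the residue of $F$ in $\Delta(\Gamma)$ has exactly $|L_1|-1$ elements.

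Flag-transitivity of $\Delta(\Gamma)$ implies transitivity on flags of type $\{1,2\}$, so the integer $|L_1|-1$ does not depend on the chosen flag. Since any line $L$ of $\Gamma$ occurs as the first component of some such flag (pick $p_1\in L$, then a second line $L_2$ through $p_1$ using axiom~(2) of a linear space, then any $p_2\in L_2\setminus\{p_1\}$), all lines of $\Gamma$ have the same number of points, say $n$. The statement about points then follows from the classical linear-space identity: for any point $p$, each of the $v-1$ other points of $\Gamma$ lies on the unique line $[p,q]$ through $p$, and each of the $r(p)$ lines through $p$ contains $n-1$ points other than $p$, giving $r(p)(n-1)=v-1$; hence $r(p)$ is constant.

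The only mildly delicate step is the careful translation of $\star_{\Delta(\Gamma)}$ into the geometric condition $L_1\cap L_2=\{p_1\}$ and its analogues for the other pairs of lines; once that is pinned down, the residue count and the standard linear-space bookkeeping do the rest, and we do not need the later results on how automorphisms of $\Delta(\Gamma)$ act on $\Gamma$.
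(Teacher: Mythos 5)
Your proof is correct, but it takes a genuinely different route from the paper's. The paper never passes through rank-two residues: it counts the type-$3$ elements incident to a \emph{single} type-$1$ element $(p,L,1)$, which gives the sum $\sum_{q\in L\setminus\{p\}}(|\mathrm{Res}_\Gamma(q)|-1)$, and then compares this quantity for two elements $(p,L,1)$ and $(p',L,1)$ sharing the same line to conclude first that all point-degrees are equal; only then does it compare two elements on different lines to deduce that all lines have the same size. You instead count the chambers containing a rank-two flag $\{(p_1,L_1,1),(p_2,L_2,2)\}$ --- correctly obtaining $|L_1|-1$, since $p_3$ ranges over $L_1\setminus\{p_1\}$ and $L_3=[p_2,p_3]$ is then forced --- which yields the constancy of line sizes in one step, and you obtain the constancy of point-degrees from the classical identity $r(p)(n-1)=v-1$ rather than from a second count inside $\Delta(\Gamma)$. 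Your version is arguably cleaner: the rank-two residue count is a single number with no sum to cancel, whereas the paper's first count only simplifies after telescoping; the trade-off is that your argument leans on the external linear-space identity and on the (true, and established earlier in the paper) fact that $\Delta(\Gamma)$ is a geometry, so that every flag of type $\{1,2\}$ extends to a chamber and chamber-transitivity really does give transitivity on such flags. You handle the one delicate point --- translating $\star_{\Delta(\Gamma)}$ into $L_i\cap L_j=\{p\}$ and checking that each $p_3\in L_1\setminus\{p_1\}$ genuinely produces a chamber --- correctly, so there is no gap.
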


\begin{proof}

Let $p , p' \in \mathcal{P}$ with $p\neq p'$. Construct $L = \big[p,p'\big]$. Counting the number of elements of type $3$ incident to $(p,L,1)$ and $(p',L,1)$ we get that

\begin{center}
    $\sum \limits_{q \in L/\{p,p'\}} (| \text{Res}_\Gamma(q)|-1) + | \text{Res}_\Gamma(p')|-1= \sum \limits_{q \in L/\{p,p'\}} (|\text{Res}_\Gamma(q)|-1) + | \text{Res}_\Gamma(p)|-1$.
\end{center}
It follows that $| \text{Res}_\Gamma(p)| = | \text{Res}_\Gamma(p')| = m$.

Let $L, L' \in \mathcal{L}$ with $L \neq L'$. Construct $p \in \text{Res}_\Gamma(L)/\text{Res}_\Gamma(L')$ and $p' \in \text{Res}_\Gamma(L')/\text{Res}_\Gamma(L)$.
Counting the number type $3$ elements incident to $(p,L,1)$ and $(p',L',1)$ we get that
\begin{center}
    $m(|\text{Res}_\Gamma(L)|-1) = m (|\text{Res}_\Gamma(L')|-1)$.
\end{center}
\end{proof}

Therefore, whenever $\Delta(\Gamma)$ is flag transitive, we will use the letters $m$ and $n$ to be $m = | \text{Res}_\Gamma(p)|$ for any $p \in \mathcal{P}$ and $n = | \text{Res}_\Gamma(L)|$ for any $L \in \mathcal{L}$.

The next lemma states that automorphisms of $\Delta(\Gamma)$ are well behaved in the sense that they send pencils of lines to pencils of lines and sets of collinear points to sets of collinear points.
For clarity, we introduce the following two projections maps 
$$\pi_1 \colon X_{\Delta(\Gamma)} \rightarrow \mathcal P : (p,L,i)  \mapsto p$$
$$\pi_2 \colon X_{\Delta(\Gamma)} \rightarrow \mathcal L :  (p,L,i)  \mapsto L$$
that will be used constantly in the proofs of this section.
\begin{lemma}\label{welldef}
    Let $\Gamma = (\mathcal{P} \sqcup \mathcal{L},\star,t)$ be a finite thick linear space such that $\Delta(\Gamma)$ is a flag-transitive geometry and let $\phi \in \Aut(\Delta(\Gamma))$. Then,
    \begin{enumerate}
\item   For every $p \in \mathcal P$ and for every $L,L' \in \text{Res}_\Gamma(p)$, we have that $\pi_1( \phi( (p,L,i) ) ) = \pi_1(\phi( (p,L',i) ))$. 
\item   For every $L \in \mathcal L$ and for every $p,p' \in \text{Res}_\Gamma(L)$, we have that $\pi_2( \phi( (p,L,i) ) ) = \pi_2(\phi( (p',L,i) ))$.
    \end{enumerate}
\end{lemma}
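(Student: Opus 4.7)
The plan is to characterize, in terms of the incidence graph of $\Delta(\Gamma)$ alone, which pairs of type-$i$ elements share a first coordinate (giving (1)) and which share a second coordinate (giving (2)). Once such a characterization is given by invariants preserved by automorphisms, both statements follow immediately, since $\phi$ preserves both types and incidence. The invariant I would use is the pair of counts of common neighbors of each adjacent type: for distinct type-$i$ elements $x, x'$, let $f_{j}(x,x')$ denote the number of elements of type $j$ incident to both $x$ and $x'$, for $j \in \{i-1, i+1\}$ (indices taken modulo $3$ as in Construction~\ref{construct}).

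Unfolding the incidence rule, a type-$(i+1)$ neighbor of $x = (p,L,i)$ has the form $(q,M,i+1)$ with $M$ a line through $p$ distinct from $L$ and $q \in M \setminus \{p\}$; a type-$(i-1)$ neighbor has the form $(q,M,i-1)$ with $q \in L \setminus \{p\}$ and $M$ a line through $q$ distinct from $L$. With the integers $m, n \geq 3$ furnished by Lemma~\ref{lem:m and n} and thickness of $\Gamma$, a direct computation in each geometric configuration of $(x, x')$ produces the signature $(f_{i+1}(x,x'), f_{i-1}(x,x'))$: it is $((m-2)(n-1), 0)$ when $p = p'$ and $L \neq L'$; $(0, (m-1)(n-2))$ when $L = L'$ and $p \neq p'$; $(n-2, 0)$ or $(n-2, m-2)$ when $p \neq p'$, $L \neq L'$, $p \notin L'$ and $p' \notin L$ (depending on whether $L \cap L'$ is empty or is a single point); and $(0, 0)$ when $p \in L'$ or $p' \in L$. (Note that the case where both $p \in L'$ and $p' \in L$ cannot occur, since two distinct lines in a linear space share at most one point.)

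Since $m, n \geq 3$, the elementary inequality $(m-2)(n-1) \geq n-1 > n-2$ is enough to show that no two of the signatures above coincide. In particular, the condition $\pi_1(x) = \pi_1(x')$ is characterised by the signature $((m-2)(n-1), 0)$, and $\pi_2(x) = \pi_2(x')$ by $(0, (m-1)(n-2))$. Since $\phi \in \Aut(\Delta(\Gamma))$ preserves types and incidence, it preserves both signatures, so pairs sharing a point are mapped to pairs sharing a point, and likewise for lines, giving (1) and (2). The only real obstacle is the bookkeeping in the case analysis and the check that all signatures remain pairwise distinct even when $m$ or $n$ equals $3$; this is also where thickness of $\Gamma$ enters the argument in an essential way.
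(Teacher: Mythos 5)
Your proposal is correct and is essentially the paper's argument: the paper likewise counts common neighbors of type $i+1$ (resp.\ $i-1$) of a pair of type-$i$ elements, obtaining $(m-2)(n-1)$ when the pair shares a point versus $n-2$ or $0$ otherwise (and dually $(m-1)(n-2)$ versus $m-2$ or $0$), with $m,n\geq 3$ from Lemma~\ref{lem:m and n} and thickness forcing the contradiction. You merely repackage the same counts as an exhaustive classification of signatures preserved by $\phi$, rather than the paper's targeted proof by contradiction; all of your counts check out.
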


\begin{proof}
    Since $\Delta(\Gamma)$ is flag-transitive, we know that each line of $\Gamma$ is incident to the same number of points of $\Gamma$ and vice-versa.
    Let thus $n $ be the number of points per line in $\Gamma$ and let $m$ be the number of lines per point in $\Gamma$. Moreover, as $\Gamma$ is thick, we also know that both $m$ and $n$ are greater than or equal to $3$.
    
    
    By Proposition~\ref{triality}, without loss of generality, we will assume that $i= 1$. Assume by contradiction that (1) does not hold. 
    Then, there must exist two elements $F_1 = (p,L_1,1)$ and $F_2 = (p,L_2,1)$ in $\Delta(\Gamma)$ such that their images by $\phi$ do not share a common point. In other words, we have that $\pi_1( \phi(F_1) ) \neq \pi_1( \phi(F_2) )$. There should be as many type $2$ elements incident to both $F_1$ and $F_2$ as there are type $2$ elements incident to both $\phi(F_1)$ and $\phi(F_2)$. We will use a counting argument to show that this is in fact impossible when $\pi_1( \phi(F_1) ) \neq \pi_1( \phi(F_2) )$. The number of elements of type $2$ that are incident to both $F_1$ and $F_2$ is $(m - 2)(n - 1)$. 
    The same should then be true for the images $\phi(F_1)$ and $\phi(F_2)$. But we claim that the number of type $2$ elements incident to both $\phi(F_1)$ and $\phi(F_2)$ is either $n - 2$ or $0$. Indeed, since $\phi(F_1)$ and $\phi(F_2)$ do not share a common vertex, an element $F = (q,L,2) \in \Delta(\Gamma)$ is incident to $\phi(F_1)$ and $\phi(F_2)$ if and only if $L = \big[ \pi_1(\phi(F_1)),\pi_1(\phi(F_2)) \big] \neq \pi_2(\phi(F_1)) \neq \pi_2(\phi(F_2))$ and $q \notin \{ \pi_1(\phi(F_1)), \pi_1(\phi(F_2)) \}$.
    Hence, there are either $(n-2)$ or $0$ elements of type $2$ incident to both $\phi(F_1)$ and $\phi(F_2)$. Since there were $(m - 2)(n - 1)$ elements of type $2$ incident to both $F_1$ and $F_2$, this would imply that $(m-2)(n-1)$ is equal to either $n-2$ or $0$. As both $m$ and $n$ are at least $3$, this leads to a contradiction with our hypothesis. Hence, we conclude that $\pi_1( \phi(F_1) ) = \pi_1( \phi(F_2) )$.

    Assume now by contradiction that (2) is not true. We proceed in a similar fashion as for (1).
    Let $F_1 = (p_1,L,1)$ and $F_2 = (p_2,L,1)$ be two elements of $\Delta(\Gamma)$ such that $\pi_2(\phi(F_1)) \neq \pi_2(\phi(F_2))$. The number of elements of type $3$ incident to both $F_1$ and $F_2$ is $(n-2)(m-1)$. 
    If there is no point in $\Gamma$ incident to both $\pi_2(\phi(F_1))$ and $\pi_2(\phi(F_2))$, there are no elements of type $3$ incident to both $\pi_2(\phi(F_1))$ and $\pi_2(\phi(F_2))$. Suppose instead that $q$ is the only point of $\Gamma$ incident to both $\pi_2(\phi(F_1))$ and $\pi_2(\phi(F_2))$. If $q = \pi_1(\phi(F_1))$ or $\pi_1(\phi(F_2))$, we again have no elements of type $3$ incident to both $\pi_2(\phi(F_1))$ and $\pi_2(\phi(F_2))$. If not, the elements of type $3$ incident to both $\pi_2(\phi(F_1))$ and $\pi_2(\phi(F_2))$ are precisely the $(q,L,3)$ with $L \neq L_1,L_2$. There are $m-2$ such elements. This would then mean that $(n-2)(m-1)$ is equal to either $m-2$ or $0$, which again leads to a contradiction. We can thus conclude that $\pi_2(\phi(F_1)) = \pi_2(\phi(F_2))$.
\end{proof}

    
    Lemma \ref{welldef} suggests that given any $\phi \in \Aut(\Delta(\Gamma))$, we can recover a well-defined 
    automorphism of $\Gamma$ which fully characterizes $\phi$. For $p \in \mathcal P$, let us define $f_{\phi,i}(p) := \pi_1( \phi((p,L,i)))$ where $L$ is any line such that $p \in L$. Similarly, define  $g_{\phi,i}(L) := \pi_2( \phi((p,L,i)))$ where $p \in L$ is any point incident to $L$. By Lemma~\ref{welldef}, the functions $f_{\phi,i}$ and $g_{\phi,i}$ are well defined.

We now want to prove that the action of $\phi \in \Aut(\Delta(\Gamma))$ is the same in every copy of $\Gamma$ and comes from an automorphism of $\Gamma$. This is achieved by the next three results.

\begin{lemma}\label{bijection}
    Let $\Gamma = (\mathcal{P} \sqcup \mathcal{L},\star,t)$ be a finite thick linear space such that $\Delta(\Gamma)$ is flag-transitive. The functions $f_{\phi,i} \colon \mathcal P \rightarrow \mathcal P$ and $g_{\phi,i} \colon \mathcal L \rightarrow \mathcal L$ induced by $\phi \in \Aut(\Delta(\Gamma))$ as above are bijections.
\end{lemma}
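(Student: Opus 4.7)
The plan is to exploit Lemma~\ref{welldef} together with the finiteness of $\Gamma$ and the injectivity of $\phi$. Since $\phi$ is an automorphism it preserves types, so Lemma~\ref{welldef} allows us to write
\[
\phi((p,L,i)) = (f_{\phi,i}(p),\, g_{\phi,i}(L),\, i)
\]
for every flag $(p,L)$ of $\Gamma$. As $\mathcal{P}$ and $\mathcal{L}$ are finite, it is enough to show that $f_{\phi,i}$ and $g_{\phi,i}$ are injective.

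For $f_{\phi,i}$, I would argue by contradiction: if $f_{\phi,i}(p_0) = f_{\phi,i}(p_1)$ for distinct points $p_0,p_1 \in \mathcal{P}$, then the existence (and uniqueness) of the line $L := \big[p_0,p_1\big]$ provided by the linear-space axioms yields two distinct elements $(p_0,L,i)$ and $(p_1,L,i)$ of $X_{\Delta(\Gamma)}$ with the same image $(f_{\phi,i}(p_0),g_{\phi,i}(L),i)$ under $\phi$, contradicting the injectivity of $\phi$.

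For $g_{\phi,i}$, once $f_{\phi,i}$ has been shown to be a bijection of $\mathcal{P}$, I would use incidence preservation: for each line $L$, the $n$ points on $L$ are mapped by $f_{\phi,i}$ into the $n$ points on $g_{\phi,i}(L)$, and since $f_{\phi,i}$ is injective and the two point-sets have the same cardinality $n$ (Lemma~\ref{lem:m and n}), this restriction is a bijection between the point-sets. Consequently, if $g_{\phi,i}(L_0) = g_{\phi,i}(L_1)$ for some $L_0,L_1 \in \mathcal{L}$, both $L_0$ and $L_1$ have the same point-set, namely $f_{\phi,i}^{-1}$ applied to the set of points incident to $g_{\phi,i}(L_0)$; hence $L_0 = L_1$.

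The argument is essentially a bookkeeping exercise once Lemma~\ref{welldef} is invoked; the only mildly delicate point I expect is the injectivity of $g_{\phi,i}$, which cannot be obtained directly from the injectivity of $\phi$ (since two lines of $\Gamma$ need not share a point), and therefore has to be deduced from the already-established bijectivity of $f_{\phi,i}$ combined with the uniform parameter $n$.
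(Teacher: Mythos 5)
Your proof is correct, but it takes a genuinely different route from the paper's. The paper proves \emph{surjectivity} and then invokes finiteness: given any $(p,L,i)\in X_{\Delta(\Gamma)}$, the bijectivity of $\phi$ together with type-preservation produces a preimage $(p',L',i)$, whence $f_{\phi,i}(p')=p$ and $g_{\phi,i}(L')=L$; a surjection of a finite set onto itself is a bijection. This argument is completely symmetric in $f_{\phi,i}$ and $g_{\phi,i}$ and is a two-line proof. You instead prove \emph{injectivity} and then invoke finiteness. For $f_{\phi,i}$ this is just as quick, using the line $\bigl[p_0,p_1\bigr]$ to manufacture two distinct elements of $X_{\Delta(\Gamma)}$ with equal images. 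For $g_{\phi,i}$ you correctly identify that the analogous trick fails when two lines are disjoint, and you repair it by first establishing that $f_{\phi,i}$ restricts to a bijection from the point set of $L$ onto the point set of $g_{\phi,i}(L)$ (using incidence preservation and the uniform line size $n$ from Lemma~\ref{lem:m and n}), so that a line is recovered from its image via its point set. This is valid --- the last step implicitly uses that a line of a linear space is determined by its (at least two) incident points --- but it costs you an extra lemma dependency and an asymmetry between points and lines that the paper's surjectivity argument avoids. As a small bonus, your detour actually establishes the stronger fact that $F_\phi$ maps the point row of $L$ onto that of $F_\phi(L)$, which is morally part of what Proposition~\ref{masterproposition} later needs; but for the lemma as stated, the paper's argument is the more economical one.
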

\begin{proof}
Let $p \in \mathcal P$ be any point of $\Gamma$. Choose $L \in \mathcal L$ such that $p \star L$. Then $(p,L,i) \in X_{\Delta(\Gamma)}$. Since $\phi$ is an automorphism, there exists $(p',L',i)$ such that $\phi((p',L',i)) = (p,L,i)$ thus $f_{\phi,i} (p') = p$. since $\mathcal P$ is finite we conclude that $f_{\phi,i}$ is a bijection. The same argument shows that $g_{\phi,i}$ is also a bijection.
\end{proof}

\begin{proposition}\label{samefunctions}
Let $\Gamma = (\mathcal{P} \sqcup \mathcal{L},\star,t)$ be a finite thick linear space such that $\Delta(\Gamma)$ is flag-transitive. Then we have $f_{\phi,1} = f_{\phi,2} = f_{\phi,3}$ and $g_{\phi,1} = g_{\phi,2} = g_{\phi,3}$.
\end{proposition}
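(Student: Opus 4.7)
My strategy rests on the observation that a chamber of $\Delta(\Gamma)$ corresponds precisely to a labeled non-degenerate triangle of $\Gamma$: a set $\{(q_1, M_1, 1), (q_2, M_2, 2), (q_3, M_3, 3)\}$ is a chamber if and only if $q_1, q_2, q_3$ are non-collinear points with $M_1 = [q_1, q_3]$, $M_2 = [q_1, q_2]$ and $M_3 = [q_2, q_3]$. In particular, the type-$2$ line of a chamber is the line through the type-$1$ and type-$2$ vertices, so swapping those two vertices produces a new chamber having the same type-$2$ line. An analogous remark holds for the type-$1$ and type-$3$ lines.

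First I would prove $g_{\phi, 1} = g_{\phi, 2} = g_{\phi, 3}$. Fix a line $L$ of $\Gamma$ and choose two distinct points $p_1, p_2 \in L$ together with a point $p_3 \notin L$ (which exists since $\Gamma$ has more than one line). From the triangle $\{p_1, p_2, p_3\}$ I would form two chambers $C$ and $C'$ differing only by the swap of $p_1$ and $p_2$ between the type-$1$ and type-$2$ slots. Both chambers share $L$ as their type-$2$ line. Applying $\phi$ and reading off the incidence between the type-$1$ and type-$2$ elements of each image chamber forces both $f_{\phi, 1}(p_1)$ and $f_{\phi, 1}(p_2)$ to be points on $g_{\phi, 2}(L)$. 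Since these two points are also on $g_{\phi, 1}(L)$ by the very definition of the induced functions, and since they are distinct (using that $f_{\phi, 1}$ is a bijection by Lemma~\ref{bijection}), the lines $g_{\phi, 1}(L)$ and $g_{\phi, 2}(L)$ share two points and must therefore coincide in a linear space. Swapping the types $2$ and $3$ instead, the same game gives $g_{\phi, 2}(L) = g_{\phi, 3}(L)$. Thus all three $g_{\phi, i}$ agree, and I may denote their common value by $g_\phi$.

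With a single $g_\phi$ at hand, the equality of the $f_{\phi, i}$ is quick. For any point $p$ of $\Gamma$, the requirement that $\phi((p, L, i)) = (f_{\phi, i}(p), g_\phi(L), i)$ be a valid element of $\Delta(\Gamma)$ forces $f_{\phi, i}(p) \in g_\phi(L)$ for every line $L$ through $p$. Picking two distinct lines $L_1, L_2$ through $p$, which is always possible since every point of a linear space lies on at least two lines, I obtain $f_{\phi, i}(p) \in g_\phi(L_1) \cap g_\phi(L_2)$ for each $i$. Because $g_\phi$ is a bijection (Lemma~\ref{bijection}), the lines $g_\phi(L_1)$ and $g_\phi(L_2)$ are distinct, and in a linear space two distinct lines share at most one point. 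Hence $f_{\phi, 1}(p) = f_{\phi, 2}(p) = f_{\phi, 3}(p)$.

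The main hurdle I expect is the bookkeeping in the first step: writing down both chambers explicitly and cleanly extracting the inclusions $f_{\phi, 1}(p_j) \in g_{\phi, 2}(L)$ directly from the definition of incidence in Construction~\ref{construct}. Once that setup is in place, the entire argument reduces to the single linear-space axiom that two distinct lines meet in at most one point.
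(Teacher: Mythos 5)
Your proof is correct, and it takes a genuinely different route from the paper's. The paper first uses flag-transitivity to reduce to the case where $\phi$ fixes a chosen element $(p,L,1)$, and then runs a pigeonhole argument on the restriction of $f_{\phi,3}$ to the points of $L$, repeating the argument from a second base point to recover $f_{\phi,3}(p)=p$. You instead work with an arbitrary $\phi$ directly: you exploit the explicit triangle structure of chambers (the type-$2$ line is $[q_1,q_2]$, etc.), extract the memberships $f_{\phi,1}(p_1), f_{\phi,1}(p_2) \in g_{\phi,2}(L)$ straight from the definition of $\star_{\Delta(\Gamma)}$ applied to two chambers differing by a swap, and then close the argument with the two linear-space axioms (two distinct points determine a unique line; two distinct lines meet in at most one point), with Lemma~\ref{bijection} supplying the needed distinctness. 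Your route has the advantage of avoiding the paper's ``without loss of generality'' normalization, which is slightly delicate: composing $\phi$ with the automorphism realizing flag-transitivity changes all the induced functions $f_{\phi,i}$, $g_{\phi,i}$ simultaneously, and transferring the conclusion back to the original $\phi$ requires knowing that the normalizing automorphism itself acts identically on the three copies --- essentially the statement being proved. Your argument sidesteps this entirely and is purely local and geometric; the paper's is shorter on the page. Both rely on flag-transitivity only through the prerequisite Lemmas~\ref{welldef} and~\ref{bijection}.
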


\begin{proof}

Let $p \in \mathcal P$ and $L \in \mathcal L$. As $\Delta(\Gamma)$ is flag-transitive, we can assume without loss of generality that $\phi( (p,L,1) ) = (p,L,1)$. Then, for any $p' \star L$, $p'\neq p$, there exists $L' \in \mathcal L$ such that $(p,L,1) \star_{\Delta(\Gamma)} (p',L',3)$. Since $\phi$ is an automorphism, we have that $(p,L,1) \star_{\Delta(\Gamma)} \phi( (p',L',3) )$. 
This means $f_{\phi,3} (p') \star L$ with $f_{\phi,3} (p') \neq p$. Since this is true for every $p'$ incident to $L$ with $p'\neq p$, and since, by Lemma~\ref{bijection}, $f_{\phi,3}$ is a bijection, the restriction of $f_{\phi,3}$ to $\{p'\in \mathcal P\mid p'\neq p \text{ and } p'\star L\}$ is a bijection. 
Now, keep the line $L$ and pick $p'' \star L$ with $p'' \neq p$ and follow the same process. We find that $f_{\phi,3} (p) \star L$, thus $f_{\phi,3} (p) = p = f_{\phi,1} (p)$.
The same reasoning allows us to prove that $f_{\phi,1} = f_{\phi,2} = f_{\phi,3}$ and $g_{\phi,1} = g_{\phi,2} = g_{\phi,3}$, where for the latter three function, we need to dualize the augments of this proof by considering pencils of points. 
\end{proof}

\begin{proposition}\label{masterproposition}
    Let $\Gamma = (\mathcal{P} \sqcup \mathcal{L},\star,t)$ be a finite thick linear space such that $\Delta(\Gamma)$ is flag-transitive. The function $F_\phi \colon X_ \Gamma \mapsto X_\Gamma$ defined by
    $$F_{\phi}(x) = 
    \begin{cases}
      f_{\phi,1}(x) & \text{if } x \in \mathcal P \\
      g_{\phi,1}(x) & \text{if } x \in \mathcal L
    \end{cases}
$$
is an automorphism of $\Gamma$.
Moreover, for all $p \in \mathcal P$, $L \in \mathcal L$ and $1 \leq i \leq 3$, we have that $\phi(p,L,i) = (F_\phi(p) , F_\phi(L) , i)$.
\end{proposition}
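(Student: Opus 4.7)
The plan is to observe that essentially all the heavy lifting has already been carried out in Lemma~\ref{welldef}, Lemma~\ref{bijection}, and Proposition~\ref{samefunctions}: this final statement is mostly an assembly of those earlier results. I would organize the proof in two stages, first verifying the formula $\phi(p,L,i) = (F_\phi(p),F_\phi(L),i)$, and then using this formula to conclude that $F_\phi$ is an automorphism of $\Gamma$.

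For the formula, I would start by fixing an arbitrary element $(p,L,i) \in X_{\Delta(\Gamma)}$ (so $p\star L$ in $\Gamma$). Since $\phi$ is an automorphism of $\Delta(\Gamma)$, it preserves the type map $t_{\Delta(\Gamma)}$, so the third coordinate of $\phi((p,L,i))$ is necessarily $i$. By the definition of the induced functions one has $\pi_1(\phi((p,L,i))) = f_{\phi,i}(p)$ and $\pi_2(\phi((p,L,i))) = g_{\phi,i}(L)$. Applying Proposition~\ref{samefunctions}, these coincide respectively with $f_{\phi,1}(p) = F_\phi(p)$ and $g_{\phi,1}(L) = F_\phi(L)$. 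Hence $\phi((p,L,i)) = (F_\phi(p),F_\phi(L),i)$, which is the second assertion.

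For the first assertion, I need $F_\phi$ to be a type-preserving, incidence-preserving bijection on $\mathcal{P}\sqcup\mathcal{L}$. Type-preservation is built into the piecewise definition since $f_{\phi,1}$ maps $\mathcal{P}$ to $\mathcal{P}$ and $g_{\phi,1}$ maps $\mathcal{L}$ to $\mathcal{L}$, and the bijectivity of $F_\phi$ is immediate from Lemma~\ref{bijection}. For incidence, if $p\star L$ then $(p,L,1)\in X_{\Delta(\Gamma)}$, so applying $\phi$ together with the formula just established yields $(F_\phi(p),F_\phi(L),1)\in X_{\Delta(\Gamma)}$, and by the very definition of $X_{\Delta(\Gamma)}$ this forces $F_\phi(p)\star F_\phi(L)$ in $\Gamma$. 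For the converse direction I would apply the same argument to $\phi^{-1}\in\Aut(\Delta(\Gamma))$, noting that $\phi^{-1}$ induces a function $F_{\phi^{-1}}$ in exactly the same way, and that $F_{\phi^{-1}}\circ F_\phi = \Id$ follows from composing the two formulas; therefore if $F_\phi(p)\star F_\phi(L)$ then applying $F_{\phi^{-1}} = (F_\phi)^{-1}$ preserves incidence in the same manner, giving $p\star L$.

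The proof is therefore more of a bookkeeping exercise than a technical challenge, and there is no real main obstacle at this stage; all the delicate geometric and counting arguments have already been handled in establishing that $f_{\phi,i}$ and $g_{\phi,i}$ are well-defined, bijective, and independent of $i$. The only point where one must be slightly careful is to notice that both directions of incidence preservation are needed and that the reverse direction requires invoking $\phi^{-1}$ (equivalently, using that incidence in $\Gamma$ is detected exactly by membership of the corresponding triple in $X_{\Delta(\Gamma)}$).
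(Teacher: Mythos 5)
Your proof is correct, and it relies on exactly the same prior results as the paper (Lemmas~\ref{welldef} and~\ref{bijection} and Proposition~\ref{samefunctions}), but you assemble them along a genuinely different route. You first isolate the formula $\phi(p,L,i)=(F_\phi(p),F_\phi(L),i)$ --- which the paper's proof never states explicitly, leaving it implicit in the definitions --- and then obtain preservation of incidence for free from the observation that $\phi$ maps $X_{\Delta(\Gamma)}$, i.e.\ the set of flags of $\Gamma$, onto itself. The paper instead unwinds the incidence relation $\star_{\Delta(\Gamma)}$ on a pair $(p,L',1)\star_{\Delta(\Gamma)}(p',L,2)$ to force $f_{\phi,1}(p)\star g_{\phi,2}(L)$, and then handles non-incidence by a second direct configuration $(p',L,1)\star_{\Delta(\Gamma)}(p,L',2)$ with $L'=\big[p,p'\big]$, which pins the image of $p$ off the image of $L$. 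You handle non-incidence by running the argument for $\phi^{-1}$ and using $F_{\phi^{-1}}=(F_\phi)^{-1}$. Your version is arguably cleaner, since the forward direction needs no analysis of $\star_{\Delta(\Gamma)}$ at all; what the paper's more hands-on version buys is a template that is reused almost verbatim in the duality setting of Proposition~\ref{duality}. The only step you should spell out is that $F_{\phi^{-1}}\circ F_\phi=\Id$ holds on \emph{all} of $\mathcal P\sqcup\mathcal L$: this needs every point to lie on some line and every line to carry some point (guaranteed by the linear space axioms), so that every element of $\Gamma$ appears in a flag to which the two composed formulas can be applied.
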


\begin{proof}
    We already know $F_\phi$ is a bijection by Lemma~\ref{bijection}. Let $p \in \mathcal P$ and $L \in \mathcal L$ such that $p \star L$. Choose $L' \neq L$ such that $p \star L'$ and $p' \star L$ with $p\neq p'$. We then have that $(p,L',1) \star_{\Delta(\Gamma)} (p',L,2)$. 
    Applying $\phi$, we get $f_{\phi,1}(p) \star g_{\phi,2}(L)$ but by Proposition~\ref{samefunctions}, we can conclude that $f_{\phi,1}(p) \star g_{\phi,1}(L)$ and hence $F_{\phi}(p) \star F_\phi(L)$.
    
    For the second part, assume that $p$ and $L$ are not incident in $\Gamma$. Pick $p' \star L$ and construct the line $L' = \big[p,p'\big]$. Then $(p',L,1) \star_{\Delta(\Gamma)} (p,L',2)$, thus $f_{\phi,2}(p)$ is not incident with $g_{\phi,1}(L)$. Hence, $f_{\phi,1}(p)$ is not incident with $g_{\phi,1}(L)$ which allows us to conclude that $F_{\phi} \in \text{Aut}(\Gamma)$.
\end{proof}

Armed with the results obtained so far, we are now ready to prove one of the main results of this section.

\begin{theorem}\label{mastertheorem}
    Let $\Gamma = (\mathcal{P} \sqcup \mathcal{L},\star,t)$ be a finite thick linear space such that $\Delta(\Gamma)$ is a geometry. Then $\Delta(\Gamma)$ is a flag transitive geometry if and only if $\text{Aut}(\Gamma)$ is transitive on $T(\Gamma)$, the set of ordered triples of non-collinear points of $\Gamma$.
\end{theorem}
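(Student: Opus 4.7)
The plan is to exhibit a natural bijection between the chambers of $\Delta(\Gamma)$ and the elements of $T(\Gamma)$, and then transport transitivity across this bijection using Proposition~\ref{masterproposition}. A chamber of $\Delta(\Gamma)$ has the shape $C = \{(p_1,L_1,1),(p_2,L_2,2),(p_3,L_3,3)\}$, and unpacking the three pairwise incidences from Construction~\ref{construct} forces $L_2 = [p_1,p_2]$, $L_3 = [p_2,p_3]$ and $L_1 = [p_3,p_1]$, with $L_1,L_2,L_3$ pairwise distinct. Since three distinct pairwise-intersecting lines of a linear space cannot share a common point, the triple $(p_1,p_2,p_3)$ lies in $T(\Gamma)$. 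Conversely, any $(p_1,p_2,p_3) \in T(\Gamma)$ determines three pairwise distinct lines via the formulas above and thus a unique chamber, yielding a bijection $\chi$ between the chambers of $\Delta(\Gamma)$ and $T(\Gamma)$.

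For the forward direction, assume $\Delta(\Gamma)$ is flag-transitive. Given triples $\mathbf{t},\mathbf{t}' \in T(\Gamma)$ with corresponding chambers $C = \chi^{-1}(\mathbf{t})$ and $C' = \chi^{-1}(\mathbf{t}')$, flag-transitivity supplies $\phi \in \Aut(\Delta(\Gamma))$ with $\phi(C) = C'$. By Proposition~\ref{masterproposition}, $\phi$ acts diagonally as $\phi(p,L,i) = (F_\phi(p), F_\phi(L), i)$ for some $F_\phi \in \Aut(\Gamma)$, so comparing type-$i$ components shows that $F_\phi$ sends $\mathbf{t}$ to $\mathbf{t}'$. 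Hence $\Aut(\Gamma)$ is transitive on $T(\Gamma)$.

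For the backward direction, any $\alpha \in \Aut(\Gamma)$ lifts to $\tilde\alpha \colon \Delta(\Gamma) \to \Delta(\Gamma)$ defined by $\tilde\alpha(p,L,i) := (\alpha(p), \alpha(L), i)$. Since $\alpha$ preserves incidence, distinctness of points and distinctness of lines, it preserves the condition $\text{Res}_\Gamma(L) \cap \text{Res}_\Gamma(L') = \{p\}$, so $\tilde\alpha \in \Aut(\Delta(\Gamma))$. If $\Aut(\Gamma)$ is transitive on $T(\Gamma)$, then for any two chambers $C,C'$ of $\Delta(\Gamma)$, picking $\alpha$ mapping $\chi(C)$ to $\chi(C')$ gives $\tilde\alpha(C) = C'$, so $\Delta(\Gamma)$ is chamber-transitive, i.e., flag-transitive.

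The genuine technical content---showing that an abstract automorphism of $\Delta(\Gamma)$ must descend to a single automorphism of $\Gamma$ acting in the same way on all three type-copies---has already been absorbed into Proposition~\ref{masterproposition}, which relies on the counting arguments in Lemmas \ref{welldef} and \ref{bijection} and on the consistency result of Proposition~\ref{samefunctions}. Given those tools, the only real work remaining is the chamber/triple bookkeeping above; no further obstacle should arise.
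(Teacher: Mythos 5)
Your proof is correct and follows essentially the same route as the paper's: both directions rest on the correspondence between chambers of $\Delta(\Gamma)$ and ordered non-collinear triples, with the backward direction lifting an automorphism of $\Gamma$ diagonally and the forward direction invoking Proposition~\ref{masterproposition} to descend an automorphism of $\Delta(\Gamma)$ to one of $\Gamma$. The only difference is that you make the chamber--triple bijection fully explicit, which the paper leaves implicit; this is a harmless (indeed welcome) elaboration rather than a different argument.
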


\begin{proof}
    Assume $\text{Aut}(\Gamma)$ acts transitively on $T(\Gamma)$. 
    Let $C_1 = \{ (p_1,L_1,1) , (p_2,L_2,2) , (p_3,L_3,3) \}$ and $C_2 = \{ (p_1',L_1',1) , (p_2',L_2',2) , (p_3',L_3',3) \}$ be two chambers of $\Delta(\Gamma)$. The associated triples of points of both chambers are in $T(\Gamma)$. Take $g \in \Aut(\Gamma)$ satisfying $g(p_j) = p_j'$ for $j=1, 2, 3$. The induced automorphism $\phi_g \in \text{Aut}(\Delta(\Gamma))$ defined by $\phi_g (p,L,i) = (g(p),g(L),i)$ sends $C_1$ to $C_2$. Hence $\Delta(\Gamma)$ is flag-transitive.

    Assume $\Delta(\Gamma)$ is flag transitive. Let $T_1 = (p_1,p_2,p_3)$ and $T_2 = (p_1',p_2',p_3')$ in $T(\Gamma)$. Construct respectively two chambers $C_1$ and $C_2$ of $\Delta(\Gamma)$, the following way:
    \begin{center}
    $C_1 = \{ (p_1,\big[ p_1, p_3\big],1) , (p_2,\big[ p_2, p_1\big],2) , (p_3,\big[ p_2, p_3\big],3) \}$,\\
    $C_2 = \{ (p_1',\big[ p_1', p_3'\big],1) , (p_2',\big[ p_2', p_1'\big],2) , (p_3,\big[ p_2', p_3'\big],3) \}$.
    \end{center}

    Let $\phi \in \text{Aut}(\Delta(\Gamma))$ such that $\phi(C_1) = C_2$. By Proposition~\ref{masterproposition}, there exists $F_{\phi} \in \text{Aut}(\Gamma)$ such that $\phi(p,L,i) = (F_\phi(p) , F_\phi(L) , i)$. Thus, $F_\phi(p_j) = p_j'$ for $j=1, 2, 3$ and $\Aut(\Gamma)$ is transitive on $T(\Gamma)$.
\end{proof}



So far, we have focused only on automorphisms of both $\Gamma$ and $\Delta(\Gamma)$. We now take a look at dualities in both these geometries. 

\begin{proposition}\label{duality}
Suppose that $\Gamma = (\mathcal{P} \sqcup \mathcal{L},\star,t)$ is a finite thick linear space and that $\Delta(\Gamma)$ is flag-transitive. Then, $\Gamma$ admits dualities if and only if $\Delta(\Gamma)$ admits dualities.
\end{proposition}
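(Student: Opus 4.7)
The plan is to prove both directions separately; the forward direction is a direct construction, whereas the converse requires extracting a duality of $\Gamma$ from one of $\Delta(\Gamma)$ by adapting the structural results of Section~\ref{sec:FT}.

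For the forward direction, suppose $\sigma$ is a duality of $\Gamma$. I would define $\delta_\sigma : X_{\Delta(\Gamma)} \to X_{\Delta(\Gamma)}$ by $\delta_\sigma(p,L,i) := (\sigma(L),\sigma(p),4-i)$. The right-hand side is a valid element of $\Delta(\Gamma)$ because $\sigma(L)$ is a point, $\sigma(p)$ a line, and $\sigma(L) \star_\Gamma \sigma(p)$ follows from $p \star_\Gamma L$. Since $\sigma^2 = \mathrm{id}$ and $i \mapsto 4-i$ is the involution swapping $1$ and $3$ while fixing $2$, the map $\delta_\sigma$ is an involution of type $(1,3)$. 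The only substantive check is that $\delta_\sigma$ preserves $\star_{\Delta(\Gamma)}$: unpacking $(p,L,i) \star_{\Delta(\Gamma)} (p',L',i+1)$ forces $L' = [p,p']$ and $L \neq L'$, and applying $\sigma$ sends the condition ``the lines $L,L'$ meet only in the point $p$'' to its linear-space dual ``the unique line through the points $\sigma(L),\sigma(L')$ is the line $\sigma(p)$'', which is precisely what the image incidence requires.

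For the converse, suppose $\delta$ is a duality of $\Delta(\Gamma)$. By composing with an appropriate power of the canonical triality $\tau$ from Proposition~\ref{triality}, I may assume $\delta$ induces the transposition $(1,3)$ on types, so that it fixes type $2$. The plan is then to show that $\delta = \delta_\sigma$ for some duality $\sigma$ of $\Gamma$, by adapting Lemmas~\ref{welldef} and \ref{bijection} together with Propositions~\ref{samefunctions} and \ref{masterproposition} to this correlation setting. Concretely, I would rerun the counting argument of Lemma~\ref{welldef}: two type-$1$ elements $(p,L_1,1)$, $(p,L_2,1)$ share $(m-2)(n-1)$ common type-$2$ neighbours, and for their images $\delta(p,L_j,1) = (q_j,M_j,3)$ the possible counts (depending on whether $M_1 = M_2$, or $M_1,M_2$ meet at a point distinct from $q_1,q_2$, or neither) combined with the thickness $m,n \geq 3$ force $M_1 = M_2$, and, as a byproduct, $m = n$. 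This yields a well-defined map $\sigma: \mathcal P \to \mathcal L$ by $\sigma(p) := \pi_2(\delta(p,L,1))$, and dually $\sigma: \mathcal L \to \mathcal P$ via $\sigma(L) := \pi_1(\delta(p,L,1))$. The identity $\delta(p,L,1) = (\sigma(L),\sigma(p),3)$ then gives incidence preservation $\sigma(L) \star_\Gamma \sigma(p)$ for free, and $\sigma^2 = \mathrm{id}$ should follow from $\delta^2 = \mathrm{id}$.

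The hard part will be the analogue of Proposition~\ref{samefunctions}: the same extraction procedure applied to the type-$3$ elements of $\Delta(\Gamma)$ a priori produces a second map $\sigma'$ swapping points and lines of $\Gamma$, and $\delta^2 = \mathrm{id}$ directly only yields $\sigma' \circ \sigma = \mathrm{id}$. To conclude $\sigma = \sigma'$, and hence $\sigma^2 = \mathrm{id}$, I would exploit the symmetry of the three copies of the flags of $\Gamma$ provided by $\tau$, mirroring the proof of Proposition~\ref{samefunctions} step by step.
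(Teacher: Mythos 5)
Your proposal is correct and follows essentially the same route as the paper: the forward direction is the same explicit construction (you induce the transposition $(1,3)$ on types where the paper induces $(2,3)$, an immaterial difference), and the converse reduces to a fixed type transposition and then adapts the counting argument of Lemma~\ref{welldef} together with Lemma~\ref{bijection}, Proposition~\ref{samefunctions} and Proposition~\ref{masterproposition} to extract a duality $F_\delta$ of $\Gamma$, exactly as the paper does. Your explicit attention to why $\sigma^2=\mathrm{id}$ requires the independence-of-$i$ statement is a point the paper treats only implicitly, but it is the same argument.
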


\begin{proof}
    Suppose first that there exists a duality $\alpha$ of $\Gamma$. This means that $\alpha$ sends points to lines and lines to points while preserving incidence. We define a correlation $\beta$ of $\Delta(\Gamma)$ as follows:

 \begin{equation*}
        \beta(p,L,i) =
        \begin{cases}
            (\alpha(L),\alpha(p), 1), \textnormal{ if } i = 1 \\
            (\alpha(L),\alpha(p), 3), \textnormal{ if } i= 2  \\
            (\alpha(L),\alpha(p), 2), \textnormal{ if } i= 3  \\
        \end{cases}
    \end{equation*}
    It is rather straightforward to check that $
    \beta$ is indeed a correlation of order two, and hence a duality. For example, we have that $(p,L,1) \star_{\Delta(\Gamma)} (p',L',2)$ if and only if $p$ is different from $p'$ and $p$ is is on both $L$ and $L'$. This holds if and only if $\alpha(L)$ is different from $\alpha(L')$ and $\alpha(L')$ is on both $\alpha(p)$ and $\alpha(p')$. Hence, we get that $(p,L,1) \star_{\Delta(\Gamma)} (p',L',2)$ if and only if $(\alpha(L'),\alpha(p'),3) \star_{\Delta(\Gamma)} (\alpha(L), \alpha(p),1)$.

    Suppose now that there exists a duality $\beta$ of $\Delta(\Gamma)$. Without loss of generality, suppose that $\beta$ exchanges elements of type $2$ and $3$ of $\Delta(\Gamma)$. We will also denote by $\beta$ the map on $\{1,2,3\}$ induced by the duality, meaning that $\beta$ fixes $1$ and exchanges $2$ and $3$. The statements and proofs of Lemmas~\ref{welldef} and~\ref{bijection} and Proposition~\ref{masterproposition} can be adapted to the setting of dualities. More precisely, it can be shown that if there exists a duality $\beta$ of $\Delta(\Gamma)$ as above, then the number $m$ of lines per points in $\Gamma$ must be equal to the number $n$ of points per line and for $p \in \mathcal{P}$, and for any $L,L'\in Res_\Gamma(p)$, we have that $\pi_2(\beta((p,L,\beta(i)))) = \pi_2(\beta((p,L',\beta(i))))$ and for $L \in \mathcal{L}$ and any $p,p' \in Res_\Gamma(L)$, we have that $\pi_1(\beta((p,L,\beta(i)))) = \pi_1(\beta((p',L,\beta(i))))$. Intuitively, this means that $\beta$ sends pencils of lines to collinear points and vice versa. The proof of this statement is almost identical to the proof of Lemma \ref{welldef}. The only difference is that since $\beta$ exchanges the roles of elements of type $2$ and $3$, we need to compare the number of elements of type $2$ incident to pairs $F_1,F_2$ of elements of $\Gamma$ with the number of elements of type $3$ (not of type $2$) of their images $\beta(F_1)$ and $\beta(F_2)$. We can then proceed to show that there are bijections $f_{\beta,i} \colon \mathcal{P} \to \mathcal{L}$ and $g_{\beta,i} \colon \mathcal{L} \to \mathcal{P}$ and that these functions actually do not depend on $i$. Finally, we can show that $F_{\beta}(x) = $
    $
    \begin{cases}
      f_{\beta,1}(x) & \text{if } x \in \mathcal P \\
      g_{\beta,1}(x) & \text{if } x \in \mathcal L
    \end{cases}
    $
    is a duality of $\Gamma$.
\end{proof}

\begin{proposition}\label{prop:dual}
    Let $\Gamma$ be a finite thick flag transitive linear space such that $\Delta(\Gamma)$ is flag transitive, and let $ \langle \tau \rangle \cong \Ct$ be the subgroup of $\text{Cor}(\Delta(\Gamma))$ generated by the canonical triality $\tau$. Then $\langle \tau \rangle$ is always normal in $\text{Cor}(\Delta(\Gamma))$. Moreover, we have that $\text{Cor}(\Delta(\Gamma)) \cong \Ct \times \text{Cor}(\Gamma)$ if $\Gamma$ does not admit dualities, and $\text{Cor}(\Delta(\Gamma)) \cong \Ct \rtimes \text{Cor}(\Gamma)$ if $\Gamma$ admits a duality.

\end{proposition}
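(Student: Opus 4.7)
The plan is to analyse the type homomorphism $\rho \colon \Cor(\Delta(\Gamma)) \to \St$, whose kernel is $\Aut(\Delta(\Gamma))$; by Proposition~\ref{masterproposition} this kernel consists exactly of the automorphisms $\phi_g$ induced by elements $g \in \Aut(\Gamma)$. First I would verify that $\tau$ commutes with every $\phi_g$: both composites send $(p,L,i)$ to $(g(p), g(L), i+1 \bmod 3)$. Since $\rho(\tau)$ is a $3$-cycle and $\Ct$ is a maximal subgroup of $\St$, $\mathrm{Im}(\rho)$ is either $\Ct$ or $\St$.

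Assume $\Gamma$ admits no duality. I would argue that $\mathrm{Im}(\rho) = \Ct$: Proposition~\ref{duality} (together with the observation that the construction of $F_\beta$ in its proof only uses that $\beta$ induces a transposition on types, not that $\beta^2 = \mathrm{id}$) rules out every transposition-inducing correlation of $\Delta(\Gamma)$. Every correlation of $\Delta(\Gamma)$ therefore has the form $\tau^i \phi_g$ with a unique $i \in \{0,1,2\}$ and a unique $g \in \Aut(\Gamma)$. Combined with the commutation and $\langle \tau \rangle \cap \Aut(\Delta(\Gamma)) = \{e\}$, this realises $\Cor(\Delta(\Gamma))$ as the internal direct product $\langle \tau \rangle \times \Aut(\Delta(\Gamma)) \cong \Ct \times \Cor(\Gamma)$, since the no-duality hypothesis forces $\Cor(\Gamma) = \Aut(\Gamma)$. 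In particular $\langle \tau \rangle$ is normal as a direct factor.

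Assume now $\Gamma$ admits a duality $\alpha$, and let $\beta$ be the duality of $\Delta(\Gamma)$ constructed from $\alpha$ in Proposition~\ref{duality}. Using the explicit formula for $\beta$ together with $\alpha^2 = \mathrm{id}$, I would check case by case that $\beta \tau \beta^{-1}(p,L,i) = \tau^{-1}(p,L,i)$ for each $i \in \{1,2,3\}$. Every correlation of $\Delta(\Gamma)$ then has the form $\tau^i \phi_g$ or $\tau^i \phi_g \beta$, and each of these conjugates $\tau$ to $\tau$ or $\tau^{-1}$, so $\langle \tau \rangle$ is normal in $\Cor(\Delta(\Gamma))$. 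The assignment $g \mapsto \phi_g$ and $\alpha \mapsto \beta$ extends to an injective group homomorphism $\Cor(\Gamma) \hookrightarrow \Cor(\Delta(\Gamma))$; the necessary compatibility $\beta \phi_g \beta^{-1} = \phi_{\alpha g \alpha^{-1}}$ is a one-line verification from the explicit formulas for $\beta$ and $\phi_g$. Its image meets $\langle \tau \rangle$ trivially and, together with $\langle \tau \rangle$, covers $\Cor(\Delta(\Gamma))$, whence $\Cor(\Delta(\Gamma)) = \langle \tau \rangle \rtimes \Cor(\Gamma)$. Since $\beta$ inverts $\tau$, the action is non-trivial and the semidirect product does not degenerate to a direct one.

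The main obstacle is justifying, in the no-duality case, that $\mathrm{Im}(\rho) = \Ct$: Proposition~\ref{duality} is stated only for order-$2$ correlations, so one must revisit its proof and observe that the construction of $F_\beta$ only uses the fact that $\beta$ induces a transposition on types, independently of its order. Once this strengthening is in hand, every remaining step is routine group-theoretic bookkeeping on the short exact sequence $1 \to \Aut(\Delta(\Gamma)) \to \Cor(\Delta(\Gamma)) \to \mathrm{Im}(\rho) \to 1$.
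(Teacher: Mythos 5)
Your proposal is correct and follows essentially the same route as the paper: both analyse the type homomorphism onto $\St$ (whose kernel is $\Aut(\Delta(\Gamma))\cong\Aut(\Gamma)$ by Proposition~\ref{masterproposition}), deduce normality of $\langle\tau\rangle$ from the fact that conjugation sends $\tau$ to $\tau^{\pm1}$, and split the resulting short exact sequence using the explicit lift of a duality of $\Gamma$ from Proposition~\ref{duality}. Your extra care in the no-duality case --- observing that the construction of $F_\beta$ only needs the type action to be a transposition, so that $\mathrm{Im}(\rho)=\Ct$ --- is a worthwhile clarification of a step the paper passes over by simply citing Proposition~\ref{duality}, but it does not change the architecture of the argument.
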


\begin{proof}
    Let $\tau$ as in Proposition~\ref{triality} and $\alpha$ a duality of $\Gamma$, then $\langle \tau \rangle \trianglelefteq \Cor(\Delta(\Gamma))$ since $\langle (1,2,3) \rangle \trianglelefteq \St$ and elements of $\Cor(\Delta(\Gamma))$ have the same action in each of the three copies of $\Gamma$ (see Proposition~\ref{samefunctions} and Proposition~\ref{duality}). If $\Gamma$ admits dualities then the following short exact sequence

    \begin{center}
        \begin{tikzcd}%
        1 \arrow[r] & \Aut(\Delta(\Gamma)) \arrow[r] & \Cor(\Delta(\Gamma)) \arrow[r]  & \St \arrow[r] & 1
    \end{tikzcd}%
    \end{center}
    is a split extension with $s \colon \St \rightarrow \Cor(\Delta(\Gamma))$ defined by $s(\mu)(p,L,i) = (p,L,\mu(i))$ if $\sigma(\mu) = 1$ and $s(\mu)(p,L,i) = (\alpha(L),\alpha(p),\mu(i))$ if $\sigma(\mu) = -1$ where $\sigma \colon \St \rightarrow \{ -1, 1 \}$ is the sign function. Thus $\Cor(\Delta(\Gamma)) \cong \Aut(\Delta(\Gamma)) \rtimes \St$. Moreover, since $\langle \tau \rangle$ is normal, $\Cor(\Delta(\Gamma)) = \langle \tau \rangle \Cor(\Gamma)$ and $\langle \tau \rangle \cap \Cor(\Gamma) = \{1\}$, we also have that $\Cor(\Delta(\Gamma)) \cong \Ct \rtimes \Cor(\Gamma)$. Meanwhile if $\Gamma$ does not admit dualities then the short exact sequence

    \begin{center}
        \begin{tikzcd}%
        1 \arrow[r] & \Aut(\Delta(\Gamma)) \arrow[r] & \Cor(\Delta(\Gamma)) \arrow[r]  & \Ct \arrow[r] & 1
    \end{tikzcd}%
    \end{center}

    is a split extension. Thus $\Cor(\Delta(\Gamma)) \cong \Aut(\Delta(\Gamma)) \times \Ct$.
\end{proof}

\subsection{The cases where $n=2$ or $m=2$}\label{betakn}
By Lemma \ref{lem:m and n}, we know that any linear space $\Gamma$ such that $\Delta(\Gamma)$ is flag-transitive must have the same number $n$ of points on every line and the same number $m$ of lines trough every point. Our theorems in the previous section took care of the cases were both $n$ and $m$ are bigger or equal to $3$. We now handle the remaining cases separately.

First, notice that if $m = 2$, there are exactly two lines through every point. Hence, $\Gamma$ is a triangle, which is a complete graphs with $3$ vertices respectively. 

If $n = 2$, $\Gamma$ is a complete graph.
Hence we can assume from now that $\Gamma$ is the geometry of a complete graph $\Kv_v$ where $v$ is the number of points of $\Gamma$. We will show that, even though $\Gamma$ has no dualities when $v\geq 4$, $\Delta(\Gamma)$ has dualities. Indeed, consider the map $\beta$ that sends $(p,L,1)$ to $(\bar{p},L,1)$, $(p,L,2)$ to $(\bar{p},L,3)$ and $(p,L,3)$ to $(\bar{p},L,2)$, where, in every case, $\bar{p}$ is the unique point of $L$ which is not $p$.

    \begin{lemma}\label{Knduality}
        Let $\Gamma$ be the geometry of the complete graph $\Kv_v$, with $v\geq 3$.
        The map $\beta$ defined above is a duality of $\Delta(\Gamma)$.
    \end{lemma}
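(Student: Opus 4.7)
The plan is to verify that $\beta$ satisfies the defining properties of a duality: it should be a bijection on $X_{\Delta(\Gamma)}$ of order $2$, respecting the type partition while inducing a nontrivial permutation of $I = \{1,2,3\}$, and preserving incidence.

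The first three checks are immediate. Each line of $\Kv_v$ contains exactly two points, so for every flag $(p,L)$ of $\Gamma$ the opposite point $\bar p$ is well defined and $\bar{\bar p} = p$. Thus $\beta$ is well defined on $X_{\Delta(\Gamma)}$ and squares to the identity, which makes it an involutive bijection. By inspection, $\beta$ fixes type $1$ and swaps types $2$ and $3$, inducing the transposition $\sigma = (2\,3)$ on the type set.

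The main step is incidence preservation. By definition, $(p,L,i) \star_{\Delta(\Gamma)} (p',L', i \bmod 3 + 1)$ holds iff $\text{Res}_\Gamma(L) \cap \text{Res}_\Gamma(L') = \{p\}$ and $p \neq p'$. In $\Kv_v$, $\text{Res}_\Gamma(L)$ is just the two-element set of endpoints of $L$, so this condition is equivalent to $L \neq L'$, $L \cap L' = \{p\}$ and $L' = [p,p']$; in particular, $p$ is the point of $L'$ distinct from $p'$. Applying $\beta$ sends the pair $((p,L,i),(p',L',i+1))$ to $((\bar p, L, \sigma(i)),(p,L',\sigma(i+1)))$. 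One then verifies case by case for $i \in \{1,2,3\}$ that the pair $(\sigma(i),\sigma(i+1))$ is again of the cyclic form $(j, j \bmod 3 + 1)$ in $\{1,2,3\}$ (possibly after reversing the two elements, which is legitimate by symmetry of $\star_{\Delta(\Gamma)}$), and that the incidence condition on the image pair reduces to $\text{Res}_\Gamma(L') \cap \text{Res}_\Gamma(L) = \{p\}$ together with $p \neq \bar p$, both of which hold by hypothesis. Since $\beta$ is an involution, the reverse implication follows at once, and $\beta$ is a correlation of order $2$, i.e.\ a duality.

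The only subtlety is the bookkeeping in the incidence step: after applying $\beta$, the marked intersection point of the image pair is still $p$ (arising as the unique point of $L'$ other than $p'$, which swaps to the first component of the image), while the lines $L$ and $L'$ themselves are unchanged. Once one of the three cases $i \in \{1,2,3\}$ is verified in this way, the remaining two follow by identical reasoning.
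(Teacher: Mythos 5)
Your proof is correct and follows essentially the same direct-verification route as the paper: the paper checks that $\beta$ maps a chamber $\{(p_1,L_1,1),(p_2,L_2,2),(p_3,L_3,3)\}$ (where $\bar{p_1}=p_3$, $\bar{p_2}=p_1$, $\bar{p_3}=p_2$) to another chamber, while you perform the equivalent computation on incident pairs, using the same key observation that the marked point of the image of the second element of an incident pair is the shared point $p$. Your version is somewhat more detailed (explicit involution and type-permutation checks, case analysis over $i$), but there is no substantive difference in method.
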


    \begin{proof}
        Let $C = \{ (p_1,L_1,1), (p_2,L_2,2),(p_3,L_3,3) \}$ be a chamber of $\Delta(\Gamma)$. Note that $\bar{p_1} = p_3,\bar{p_2} = p_1,\bar{p_3}=p_2$. Then $$\beta(C) = \{ (\bar{p_1}, L_1,1),(\bar{p_2},L_2,3),(\bar{p_3},L_3,2)\} = \{ (p_3,L_1,1),(p_2,L_3,2),(p_1,L_2,3)\}$$ is a chamber of $\Delta(\Gamma)$. This suffices to show that $\beta$ is a bijection on the elements of $\Delta(\Gamma)$ that preserves incidence.
    \end{proof}

This shows that the thickness hypothesis in Proposition \ref{duality} is essential.

Observe now that the incidence graph of $\Delta(\K3)$ consists of six disconnected cliques and $\Aut(\Delta(\K3)) \cong \Ss$.
For $v \geq 4$, the statement of Lemma ~\ref{welldef} holds as well for $\Gamma = \Kv_v$. We prove it in the next lemma.

\begin{lemma}
    Let $\Gamma = \Kv_v = (\mathcal{P} \sqcup \mathcal{L},\star,t)$,  $v \geq 4$ and $\phi \in \Aut(\Delta(\Kv_v))$. Then
    \begin{enumerate}
\item   For every $p \in \mathcal P$ and for every $L,L' \in \text{Res}_\Gamma(p)$, we have that $\pi_1( \phi( (p,L,i) ) ) = \pi_1(\phi( (p,L',i) ))$. 
\item   For every $L \in \mathcal L$ and for every $p,p' \in \text{Res}_\Gamma(L)$, we have that $\pi_2( \phi( (p,L,i) ) ) = \pi_2(\phi( (p',L,i) ))$.
    \end{enumerate}
\end{lemma}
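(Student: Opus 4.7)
My plan is to treat parts~(1) and~(2) separately: part~(1) adapts the counting argument of Lemma~\ref{welldef} directly, while part~(2) collapses under that counting and requires a structural argument exploiting part~(1).

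For part~(1), it suffices by Proposition~\ref{triality} to handle $i = 1$. Fix $F_1 = (p, L_1, 1)$ and $F_2 = (p, L_2, 1)$ with $L_1 \neq L_2$, and count the type~$2$ elements incident to both. A common incident element $(q, M, 2)$ satisfies $M \cap L_j = \{p\}$ for $j = 1, 2$; in $K_v$ this means $M$ is a line through $p$ distinct from $L_1$ and $L_2$, with $q$ forced to be the other endpoint of $M$, giving $v - 3$ choices. If instead $\pi_1(\phi(F_1)) \neq \pi_1(\phi(F_2))$, a common incident type~$2$ element of their images would need an underlying line passing through both distinct image points; such a line is unique in $K_v$ and its two points are precisely those image points, so no valid $q$ exists and the count would be zero, contradicting $v - 3 \geq 1$ for $v \geq 4$. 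Thus $\pi_1(\phi(F_1)) = \pi_1(\phi(F_2))$.

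For part~(2), the same count gives $0 = 0$, since two flags on a common line of $K_v$ have no adjacent-type common neighbors. My strategy is to combine part~(1) (applied to all three types via Proposition~\ref{triality}) with the cyclic incidence of $\Delta(K_v)$. Part~(1) yields bijections $f_i \colon \mathcal{P} \to \mathcal{P}$ with $\phi((p, L, i)) = (f_i(p), M, i)$ for some line $M$ depending on $(p, L, i)$. With indices taken cyclically modulo $3$, the incidence $(q, \{q, x\}, i-1) \star (p, \{p, q\}, i)$ holds exactly when $x \notin \{p, q\}$. Applying $\phi$ and expanding the incidence condition on the image lines $\tilde M$ (of the first flag) and $M$ (of the second) identifies $\tilde M \cap M = \{f_{i-1}(q)\}$ and $f_{i-1}(q) \neq f_i(p)$, forcing $M = \{f_i(p), f_{i-1}(q)\}$. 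So $\phi((p, \{p, q\}, i)) = (f_i(p), \{f_i(p), f_{i-1}(q)\}, i)$.

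The main obstacle is to collapse $f_1, f_2, f_3$ to a single bijection. For the above image to be a valid flag, its line must have two distinct endpoints, so $f_i(p) \neq f_{i-1}(q)$ whenever $p \neq q$. Since $f_{i-1}$ is a bijection, $\{f_{i-1}(q) : q \neq p\} = \mathcal{P} \setminus \{f_{i-1}(p)\}$, and the inequality forces $f_i(p) = f_{i-1}(p)$. Cycling through $i = 1, 2, 3$ gives $f_1 = f_2 = f_3 =: f$, so $\phi$ acts diagonally: $\phi((p, \{p, q\}, i)) = (f(p), \{f(p), f(q)\}, i)$. Statement~(2) follows immediately, since $F_1 = (p_1, L, 1)$ and $F_2 = (p_2, L, 1)$ on $L = \{p_1, p_2\}$ both map to flags carrying the common line $\{f(p_1), f(p_2)\}$.
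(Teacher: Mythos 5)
Your proof is correct, and while part~(1) is essentially the paper's argument (the count of common type-$2$ neighbours is $(m-2)(n-1)=v-3>0$ for the preimages versus $0$ for images with distinct points), your part~(2) takes a genuinely different route. The paper observes that the naive count degenerates to $0=0$ when $n=2$ and instead argues via distances along ``alternating'' paths in the incidence graph: two flags on a common line are at alternating distance at least $6$, whereas images with distinct lines would be joined by an alternating path of length $3$; that construction needs $v\geq 5$, so the paper disposes of $v=4$ by a {\sc Magma} computation. You instead extract from part~(1) the well-defined point maps $f_i$ (bijective by the finiteness argument of Lemma~\ref{bijection}), use the cyclic incidence rule to pin down the image line as $\{f_i(p),f_{i-1}(q)\}$, and then exploit bijectivity of $f_{i-1}$ to force $f_i(p)=f_{i-1}(p)$, so that $\phi$ acts diagonally through a single bijection $f$. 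This buys two things: it covers $v=4$ uniformly with no computer check (you only need $v-3\geq 1$ for part~(1) and a point outside $\{p,q\}$ for part~(2)), and it already yields the stronger conclusion $\phi((p,L,i))=(f(p),f(L),i)$, which the paper only reaches afterwards by rerunning the arguments of Propositions~\ref{samefunctions} and~\ref{masterproposition} for $K_v$. The one step you should make explicit is the bijectivity of the $f_i$, which does not follow from part~(1) alone but from the surjectivity of $\phi$ on type-$i$ elements together with finiteness, exactly as in Lemma~\ref{bijection}.
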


\begin{proof}
    If $v = 4$, a computation with {\sc Magma}~\cite{magma} shows the statements holds. Let us now assume that $v \geq 5$.
    
    For the first claim, the proof of Lemma \ref{welldef} works unchanged since $m = v-1$ and so $m-2 \neq 0$.
    
    For the second claim, let $(p,L,1)$ and $(\bar{p},L,1)$ with $\bar{p} \neq p$. We first compute the distance between $F_1 = (p,L,1)$ and $F_2 = (\bar{p},L,1)$ using only paths that cyclically alternate between type 1, then 2 and then 3 and repeat. We call it an alternating path. Any such path starting at $(p,L,1)$ and ending at $(\bar{p},L,1)$ has length a multiple of $3$ since we start at type $1$ and end at type $1$. Firstly, such path cannot have length $3$. Indeed, let $$(p,L,1),(p_2,L_2,2),(p_3,L_3,3),(\bar{p},L,1)$$ be an incidence chain in $\Delta(\Gamma)$. Since $(p_3,L_3,3) \star_{\Delta(\Gamma)}(\bar{p},L,1)$ , $p_3 = p$ so $L_2 = \big[ p,p_2 \big] = \big[ p_3,p_2 \big] = L_3$ thus contradicting  $(p_2,L_2,2) \star_{\Delta(\Gamma)} (p_3,L_3,3)$\footnote{Observe that such a path of length $6$ exists. Indeed, since $v\geq 4$ let $p_2,p_3 \in X_\Gamma / \{p,\bar{p}\}$ and construct $L_2 = \big[ p,p_2\big] , L_3 = \big[ p_2,p_3\big], L_1' = \big[\bar{p},p_3\big],L_2' = \big[ \bar{p},p_2\big]$ then
    $$(p,L,1),(p_2,L_2,2),(p_3,L_3,3),(\bar{p},L_1',1),(p_2,L_2',2),(p,L_2,3),(\bar{p},L,1)$$
    is an incidence chain in $\Delta(\Gamma)$ (see Figure~\ref{Kn6}).}.
    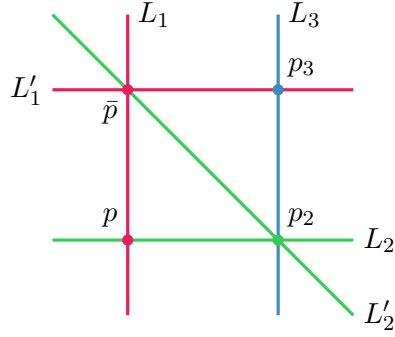
\begin{figure}
    
\begin{center}
\begin{tikzpicture}
    \draw (-3,0) node[left]{$L_1'$};
    
    \draw [red, very thick] (-3,0) -- (1,0) ;
    \draw [red, very thick] (-2,-3) -- (-2,1) ;
    \draw [blue, very thick] (0,-3) -- (0,1) ;
    \draw [green, very thick] (-3,-2) -- (1,-2) ;
    \draw [green, very thick] (-3,1) -- (1,-3) ;
    
    \draw (-2,1) node[right]{$L_1$};
    \draw (1,-2) node[right]{$L_2$};
    \draw (0,1) node[right]{$L_3$};
    \draw (1,-3) node[right]{$L_2'$};
    
    \draw (-2,-1.7) node[left]{$p$};
    \draw (-2,-0.3) node[left]{$\bar{p}$};
    \draw (0,-1.7) node[right]{$p_2$};`
    \draw (0,0.3) node[right]{$p_3$};`
    \node at (0,-2)[circle,fill,inner sep=1.5pt,color=green]{};
    \node at (-2,-2)[circle,fill,inner sep=1.5pt,color=red]{};
    \node at (0,0)[circle,fill,inner sep=1.5pt,color=blue]{};
    \node at (-2,0)[circle,fill,inner sep=1.5pt,color=red]{};
    
\end{tikzpicture}
\caption{An alternating path from $F_1$ to $F_2$ of length $6$.}\label{Kn6}
\end{center}
\end{figure}
    Assume $\pi_2(\phi(F_1)) \neq \pi_2(\phi(F_2))$.
    Let $p_3 \in (\pi_2(\phi(F_1)) \cup \pi_2(\phi(F_2))) \backslash \{\pi_1(\phi(F_1)),\pi_1(\phi(F_2))\}$. Without loss of generality assume $p_3 \in \pi_2(\phi(F_2))$. Let $p_2 \in \Kv_v \backslash (\pi_2(\phi(F_1)) \cup \pi_2(\phi(F_2)))$. Such a $p_2$ exists since $v \geq 5$. Let $L_2 = \big[ \pi(\phi(F_1)), p_2 \big]$ and $L_3 = \big[ p_2,p_3 \big]$. Then $$\phi(F_1),(p_2,L_2,2),(p_3,L_3,3),\phi(F_2)$$ is an incidence chain respecting the alternating path condition of length three, contradicting that it should be at least of length six (see Figure~\ref{Kn3}).
\begin{figure}
    
\begin{center}
\begin{tikzpicture}
    \draw [red, very thick] (2,-3) -- (2,1) ;
    \draw [red, very thick] (-2,-3) -- (-2,1) ;
    \draw [green, very thick] (-3,-2) -- (1,-2) ;
    \draw [blue, very thick] (-1,-3) -- (3,1) ;
    
    \draw (-2,1) node[right]{$\pi_2(\phi(F_1))$};
    \draw (1,-2) node[right]{$L_2$};
    \draw (2,1) node[left]{$\pi_2(\phi(F_2))$};
    \draw (3,1) node[right]{$L_3$};
    
    \draw (-2,-1.7) node[left]{$\pi_1(\phi(F_1))$};
    \draw (2,-2) node[right]{$\pi_1(\phi(F_2))$};
    \draw (0,-2) node[above]{$p_2$};`
    \draw (2,-0.2) node[right]{$p_3$};`
    \node at (0,-2)[circle,fill,inner sep=1.5pt,color=green]{};
    \node at (-2,-2)[circle,fill,inner sep=1.5pt,color=red]{};
    \node at (2,0)[circle,fill,inner sep=1.5pt,color=blue]{};
    \node at (2,-2)[circle,fill,inner sep=1.5pt,color=red]{};
    
\end{tikzpicture}
\caption{An alternating path from $\phi(F_1)$ to $\phi(F_2)$ of length $3$.}\label{Kn3}
\end{center}
\end{figure}
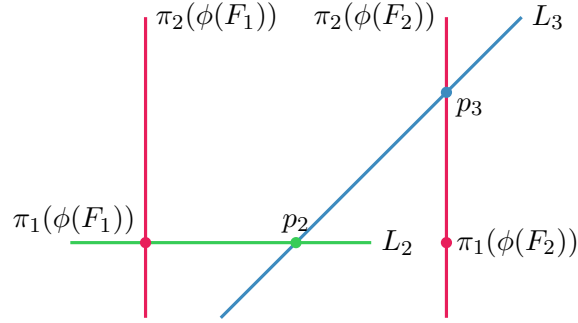
\end{proof}

This previous lemma allows us to repeat the same arguments as in Lemma \ref{bijection}, Proposition \ref{samefunctions}, Proposition \ref{masterproposition} and Theorem \ref{mastertheorem} with $\Gamma = \Kv_v$ to conclude that $\Aut(\Delta(\Kv_v)) \cong \Aut(\Kv_v) \cong \Sv_v$.
Moreover, let $\beta \in \Delta(\Kv_v)$ as in Section \ref{betakn} and let $\tau$ be the canonical triality. Then, we have that  $\St \cong \langle \tau , \beta \rangle$ and it is easily checked that $\langle \tau , \beta \rangle$ is a normal subgroup of $ \Cor(\Delta(\Kv_v))$ in this case. Hence, we obtain that $\Cor(\Delta(\Kv_v)) \cong \Sv_v \times \St$.

\section{Flag-transitive linear spaces and triangles}\label{sec:linear}
In this section we want to prove Theorem~\ref{transitive}, that classifies linear spaces having an automorphism group acting transitively on the set of triples of non-collinear points. The motivation here is given by Theorem~\ref{mastertheorem}.
 
\begin{lemma}\label{lemtriangle}
    If $\Gamma$ is a linear space and $G\leq \Aut(\Gamma)$ is transitive on the set of ordered triples of non-collinear points, then $\Gamma$ is a flag-transitive linear space and $(\Gamma,G)$ is $(2T)_1$.
\end{lemma}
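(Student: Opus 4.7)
The plan is to extend any configuration we need to move to a triple of non-collinear points, and then to invoke the hypothesis that $G$ is transitive on $T(\Gamma)$. First I would record the basic observation that, in a linear space as defined in Section~\ref{sec:prelim}, for every line $L$ there exists a point not incident to $L$. Indeed, pick any $p\star L$; by axiom (2) there is a line $L'\neq L$ through $p$, and by axiom (1) $L'$ carries a point $q\neq p$. If $q\star L$, then both $L$ and $L'$ would pass through the two distinct points $p,q$, contradicting axiom (3). Thus $q\not\star L$, and this will be our source of ``third points'' off any given line.

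Next, for flag-transitivity, let $(p_1,L_1)$ and $(p_2,L_2)$ be two flags. For each $i\in\{1,2\}$ pick $p_i'\star L_i$ with $p_i'\neq p_i$ (possible by axiom (1)), and $p_i''\not\star L_i$ (possible by the previous paragraph). The triples $(p_i,p_i',p_i'')$ are non-collinear for $i=1,2$, so by hypothesis there is $g\in G$ mapping one to the other. Since $g$ is an automorphism and lines are determined by any two of their points, $g(L_1)=g([p_1,p_1'])=[p_2,p_2']=L_2$, so $g$ sends the flag $(p_1,L_1)$ to $(p_2,L_2)$.

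For the $(2T)_1$ property I must show that for every line $L$ the stabilizer $G_L$ is $2$-transitive on the points of $L$, and that for every point $p$ the stabilizer $G_p$ is $2$-transitive on the lines through $p$. For the first, given two ordered pairs of distinct points $(p_1,p_1'),(p_2,p_2')$ on $L$, fix once and for all a point $q\not\star L$ and apply transitivity on $T(\Gamma)$ to the non-collinear triples $(p_1,p_1',q)$ and $(p_2,p_2',q)$; the resulting $g$ fixes $L=[p_1,p_1']=[p_2,p_2']$ and so lies in $G_L$. For the second, given two ordered pairs of distinct lines $(L_1,L_1'),(L_2,L_2')$ through $p$, pick $q_i\star L_i$ with $q_i\neq p$ and $q_i'\star L_i'$ with $q_i'\neq p$; the triples $(p,q_i,q_i')$ are non-collinear (otherwise the line through $p$ and $q_i$, which must be $L_i$, would also contain $q_i'$ and hence coincide with $L_i'$ by axiom (3)), so an element of $G$ mapping one triple to the other fixes $p$ and maps $(L_1,L_1')$ to $(L_2,L_2')$.

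There is no real obstacle here: the only subtlety is justifying the existence of a point off a given line, which is exactly where axiom~(2) in the definition of linear space is used. Everything else is a direct application of the transitivity hypothesis together with the fact that, in a linear space, two distinct points determine a unique line, so automorphisms that agree on pairs of points automatically agree on the lines they span.
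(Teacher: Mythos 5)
Your proof is correct and follows essentially the same route as the paper: everything is deduced by extending the relevant configurations to non-collinear triples and invoking the transitivity hypothesis. You are in fact more explicit than the paper's own (very terse) argument --- in particular you justify the existence of a point off a given line via axiom (2) and you prove the flag-transitivity claim directly, both of which the paper leaves implicit.
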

\begin{proof}
    Take a flag $\{p,L\}$ of $\Gamma$ where $p$ is a point of $\Gamma$ and $L$ is a line of $\Gamma$.
    As $(\Gamma,G)$ is transitive on the set of ordered triples of non-collinear points, the stabilizer of $p$ and $L$ in $G$, namely $G_{p,L}$, must act transitively of the lines incident to $p$ and distinct from $L$. Hence the action of the stabilizer $G_p$ must be two-transitive on the lines containing $p$. Similarly, $G_{p,L}$ must be transitive on the points of $L$ distinct from $p$. Hence $G_L$ must act two-transitively on the points incident to $L$.  
\end{proof}
This lemma permits us to rely on the classification of $(2T)_1$ flag-transitive linear spaces that we recall below. 
\begin{theorem}\cite{BDL2003}\label{2t1}
Assume $\Gamma$ is a flag-transitive linear space of $v$ points with a group $G$ acting on it. Then, if $(\Gamma,G)$ is $(2T)_1$, one of the following occurs:
\begin{enumerate}
\item $\Gamma={\AG}(2,4)$, with $G=\AGammaL(1,16)$.
\item $\Gamma=\PG(n,q)$, $v=\frac{q^{n+1}-1}{q-1}$, $\PSL(n+1,q)\trianglelefteq G \le \PGammaL(n+1,q)$ with $n\ge 2$.
\item $\Gamma=\PG(3,2)$, $v=15$ with $G\cong \Alt_7$.
\item $\Gamma=\AG(n,q)$, $v=p^d=q^n$, $G=p^d:G_0$ with $\SL(n,q)\trianglelefteq G_0$, $q\ge 3$ and $n\ge 2$.
\item  $\Gamma$ is a hermitian unital $\UH(q)$, $v=q^3+1$, $\PSU(3,q)\trianglelefteq G$.
\item $\Gamma$ is a circle and $G$ is a $3$-transitive permutation group:
     \begin{enumerate}
	\item $G=\Alt_v,G=S_{v}$, $v\ge 3$.
	\item $G\trianglerighteq {\PSL}(2,q)$, $v=q+1$ and $G$ normalizes a sharply $3$-transitive permutation group.
	\item $G = \Mv_v$, $v=11,12,22,23,24$ or $G=\Aut(\Mvd)$, $v=22$.
	\item $G= \Mo$, $v=12$.
	\item $G= \Alt_7$, $v=15$.
        \item $G=\ea(2^n):G_{0}$, $v=2^n$, $G_0\trianglerighteq \SL(n,2)$ with $n\ge 2$.
	\item $G=\ea(2^4):\Alt_7$, $v=16$.
      \end{enumerate}
\end{enumerate}
The converse is also true. For any pair $(\Gamma,G)$ satisfying the conditions of one of the cases given above, the action of $G$ is $(2T)_1$.
\end{theorem}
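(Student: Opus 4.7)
The plan is to reduce the problem to Theorem \ref{2t1} via Lemma \ref{lemtriangle}, and then refine case by case. Transitivity of $G$ on $T(\Gamma)$ implies $(2T)_1$, so every candidate $(\Gamma,G)$ must appear in the list of Theorem \ref{2t1}; the work is to determine, for each entry of that list, whether the stronger condition of transitivity on ordered non-collinear triples holds, and to list the converses.

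I would dispose of the circle cases first. If every line has exactly two points, then any three pairwise distinct points are automatically non-collinear, so transitivity on $T(\Gamma)$ is simply $3$-transitivity on points. This shows case (6) of Theorem \ref{2t1} matches case (5) of Theorem \ref{transitive} verbatim. For the projective cases $\PG(n,q)$ with $n\geq 2$, an elementary linear algebra argument shows that $\PSL(n+1,q)$ acts transitively on ordered triples of points in general position (choose a basis extending each triple), yielding cases (1) and (2) of Theorem \ref{transitive} essentially for free; the sporadic $\PG(3,2)$ with $\Alt_7$ is verified by a direct count of orbits of non-collinear triples, which can be done by hand or with \textsc{Magma}.

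For the affine and unital cases, a genuine restriction appears. In $\AG(n,q)$ with $n\geq 3$ one can always complete any two linearly independent vectors to a basis with determinant $1$, so $\SL(n,q)$ suffices; but in $\AG(2,q)$ three non-collinear points $p_0,p_1,p_2$ carry a $\GL(2,q)$-invariant given by the determinant $\det(p_1-p_0,\, p_2-p_0)\in \FF_q^\times$, which takes $q-1$ distinct values but is trivial for $\SL$. Hence transitivity on $T(\Gamma)$ forces $\GL(2,q)\trianglelefteq G_0$ when $n=2$, producing case (3) of Theorem \ref{transitive} and simultaneously eliminating case (1) of Theorem \ref{2t1}, since $\AGammaL(1,16)$ does not contain $\GL(2,4)$. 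For the hermitian unital $\UH(q)$, the analogous step is to use the known orbit structure of $\PGammaU(3,q)$ on ordered non-collinear triples: $\PSU(3,q)$ is transitive on chords, but on triples there is a combinatorial invariant (encoded by the restriction of the hermitian form, or equivalently by the number of absolute points in the polar triangle) which collapses only for very small $q$; after adjoining field automorphisms one obtains transitivity precisely for $q\in\{2,4\}$ with the full $\PGammaU(3,q)$, giving case (4). I expect the hermitian unital to be the main obstacle, since one must both produce the invariant that obstructs transitivity for $q\geq 3$, $q\neq 4$, and verify by direct computation (likely computer-aided) that the two small cases $q=2,4$ really are transitive.

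The converse is then a case-by-case verification: for each listed pair $(\Gamma,G)$ one checks either via a direct construction of an element sending one triple to another (projective and affine cases), via $3$-transitivity (circles), or by the explicit orbit computation already used (unitals with $q\in\{2,4\}$ and $\PG(3,2)$ with $\Alt_7$).
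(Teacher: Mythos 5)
The statement you were asked to prove is Theorem~\ref{2t1} itself, i.e.\ the classification of pairs $(\Gamma,G)$ with $\Gamma$ a flag-transitive linear space and $(\Gamma,G)$ satisfying $(2T)_1$. Your proposal does not prove this: its very first sentence takes Theorem~\ref{2t1} as the main input (``every candidate $(\Gamma,G)$ must appear in the list of Theorem~\ref{2t1}'') and then filters that list by the stronger condition of transitivity on ordered non-collinear triples. That is a proof strategy for Theorem~\ref{transitive}, not for Theorem~\ref{2t1}; as an argument for Theorem~\ref{2t1} it is circular. In the paper itself Theorem~\ref{2t1} carries no proof at all --- it is imported verbatim from \cite{BDL2003} --- and a genuine proof is of an entirely different nature: it rests on the classification of flag-transitive linear spaces (and hence ultimately on the classification of finite simple groups), together with a residue-by-residue analysis of when the flag stabilizer acts two-transitively on the points of a line and on the lines through a point. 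None of the ingredients you list (the determinant invariant on $\AG(2,q)$, the hermitian-form invariant on $\UH(q)$, the order comparison for $\AGammaL(1,16)$) contributes to establishing that the list in Theorem~\ref{2t1} is exhaustive; they only serve to prune it afterwards.

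It is worth saying that, read as a blind attempt at Theorem~\ref{transitive}, your outline is close in structure to what the paper actually does there (Lemma~\ref{lemtriangle} to get $(2T)_1$, then a case-by-case pass over the list), and your $\GL$-versus-$\SL$ determinant obstruction in $\AG(2,q)$ is a cleaner explanation than the paper's terse remark. But for the unital case the paper does not need any orbit-structure invariant: it simply compares $|T(\Gamma)| = (q^3+1)\,q^3\,(q^3-q)$ with $|\PGammaU(3,q)| = (q^3+1)\,q^3\,(q^2-1)\cdot 2e$ and observes that transitivity forces $q \in \{2,4\}$, and likewise disposes of $\AGammaL(1,16)$ by an order count. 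If you intend your text as a proof of Theorem~\ref{transitive}, replace the appeal to ``known orbit structure'' by this counting argument, and be aware that your deduction ``transitivity on $T(\Gamma)$ forces $\GL(2,q)\trianglelefteq G_0$'' needs care: a priori the determinant argument only forces $\det(G_0)=\FF_q^{\times}$, and one must still argue that together with $\SL(2,q)\trianglelefteq G_0$ this yields $\GL(2,q)\le G_0$. But none of this rescues the proposal as a proof of the statement actually posed.
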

By Lemma~\ref{lemtriangle}, in order to prove Theorem~\ref{transitive}, we only need to look at all the cases appearing in Theorem~\ref{2t1}
and check which cases give linear spaces with a group acting transitively on the set of ordered triples of non-collinear points.

\begin{proof}[Proof of Theorem~\ref{transitive}]
Let $\Gamma$ be a linear space.
    Let $T(\Gamma)$ be the set of ordered triples of non-collinear points of $\Gamma$.
    We check the cases of Theorem~\ref{2t1} one by one to see which ones remain.

    (1) In this case, $\Gamma$ has 16 points and $\AGammaL(1,16)$ is of order 960. The set $T(\Gamma)$ has size $16\cdot 15\cdot 12 = 2880$. Hence this case cannot occur.

    (2) In this case, the group $\PSL(n+1,q)$ obviously acts transitively on $T(\Gamma)$ and so do all groups $G$ with $\PSL(n+1,q)\trianglelefteq G \le \PGammaL(n+1,q)$. This becomes case (1) of Theorem~\ref{transitive}.

    (3) The cardinality of $T(\Gamma)$ is $15\cdot 14\cdot 12 = 2520$ and the group $\Alt_7$ acts transitively on $T(\Gamma)$. This becomes case (2) of Theorem~\ref{transitive}.

    (4) In that case, we need $G_0$ to be large enough so that the group $p^n:G_0$ acts transitively on triples of non-collinear points; This implies that $G_0\geq \GL(n,q)$ when $n=2$. This is case (3) of Theorem~\ref{transitive}.

    (5) The cardinality of $T(\Gamma)$ is $(q^3+1)\cdot q^3\cdot (q^3-q)$ while $|\PGammaU(3,q)| = (q^3+1)\cdot q^3\cdot (q^2-1)\cdot 2e$ where $q = p^e$. Hence the only possibilities for $G$ to be transitive on $T(\Gamma)$ is for $q = 2$ or $4$ and $G = \PGammaU(2,q)$. This is case (4) of Theorem~\ref{transitive}.

    (6) In all these cases, the group is 3-transitive on the points and lines have two points, hence no three points are collinear and $G$ acts transitively on $T(\Gamma)$. This is case (5) of Theorem~\ref{transitive}.
\end{proof}

\section{Connectedness and residual connectedness}\label{sec:rc}
In this section, we want to prove Theorem~\ref{classification}. Hence, for each linear space $\Gamma$ appearing in Theorem~\ref{transitive}, we now check which ones give a residually connected geometry using Construction~\ref{construct}.
Observe that the input for Construction~\ref{construct} is the incidence geometry. The group acting is not important. Therefore, Case (2) of Theorem~\ref{transitive} will be dealt with Case (1).

\begin{proposition}\label{connected}
    If $\Gamma = \AG(n,q)$ with $n \geq 2, q \geq 3$ or $\Gamma = \PG(n,q)$ with $n \geq 2, q \geq 2$ then $\Delta(\Gamma)$ is a connected geometry.
\end{proposition}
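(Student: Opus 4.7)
The plan is to show the incidence graph of $\Delta(\Gamma)$ is connected by exhibiting short explicit chains. Since $\Delta(\Gamma)$ is an incidence geometry whenever $\Gamma$ is a linear space (established earlier in the paper), every element of $\Delta(\Gamma)$ lies in some chamber and hence is adjacent to a type-$1$ element. It therefore suffices to show that any two elements of type $1$ lie in the same connected component. Moreover, since the flag graph of $\Gamma$ is trivially connected in both the projective and affine cases, it is enough to handle two elementary moves: (A) linking $(p,L,1)$ and $(p,L',1)$ for two lines $L, L'$ through a common point $p$, and (B) linking $(p,L,1)$ and $(p',L,1)$ for two points $p, p'$ on a common line $L$. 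Iterating these two moves along a path in the flag graph of $\Gamma$ connects any two type-$1$ elements.

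For move (A), I would exhibit a length-three path $(p,L,1) \sim (q,M,2) \sim (r,N,3) \sim (p,L',1)$ as follows. Under our hypotheses, every point lies on at least three distinct lines (since $n \geq 2$ together with $q \geq 2$ for $\PG$ or $q \geq 3$ for $\AG$ yields $(q^n-1)/(q-1) \geq 3$). We can therefore choose a line $M$ through $p$ distinct from both $L$ and $L'$, then pick any $q \in M \setminus \{p\}$, any $r \in L' \setminus \{p\}$, and set $N := [q,r]$. Move (B) is treated by an analogous length-three path: using that every line has at least three points (again ensured by the hypotheses on $q$), pick an auxiliary $r \in L \setminus \{p,p'\}$, choose any $M$ through $p$ with $M \neq L$, any $q \in M \setminus \{p\}$, and set $N := [q,r]$.

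The main technical task is the routine verification that each consecutive pair in these paths satisfies the incidence condition of Construction~\ref{construct}, which amounts to checking that the auxiliary lines $M$ and $N$ are distinct from the lines already in play and that their pairwise intersections reduce to a single point. These checks reduce to the basic linear-space axiom that two distinct lines meet in at most one point, together with the observation that the auxiliary point and line choices above automatically avoid degenerate coincidences (for instance, $r \notin M$ because $M \cap L' = \{p\}$ and $r \neq p$, and symmetrically $q \notin L'$, so that $N = [q,r]$ is automatically distinct from both $M$ and $L'$). I do not expect a substantive obstacle here; the richness of $\PG(n,q)$ and $\AG(n,q)$ leaves more than enough room to choose auxiliary elements avoiding all degeneracies.
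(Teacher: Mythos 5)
Your proposal is correct and follows essentially the same route as the paper's proof: reduce to connecting type-$1$ elements, handle the two elementary moves (common point / common line) by explicit length-three incidence chains through auxiliary points and lines, and conclude by the connectivity of $\Gamma$ itself. The incidence verifications you sketch (e.g.\ $r \notin M$ because $M \cap L' = \{p\}$ and $r \neq p$) are exactly the ones needed and do go through.
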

\begin{proof}
The fact that $\Delta(\Gamma)$ is a geometry is obvious. 
Hence we can check connectivity by checking that any two elements of type $1$ of $\Delta(\Gamma)$ are connected.

Let $(p_1,L_1,1) \in X_{\Delta(\Gamma)}$. We claim that for any $p_1' \star L_1$ and for any 
$L_1'\star p_1$ with $p_1\neq p_1'$ and $L_1 \neq L_1'$, there is an incidence chain from $(p_1,L_1,1)$ to $(p_1',L_1,1)$ and a chain from $(p_1,L_1,1)$ to $(p_1,L_1',1)$. 

For the first claim, choose a point $p_2$ not incident to $L_1$ and a point $p_3 \notin \{p_1',p_1\}$ 
        with $p_3 \star L_1$. Such a point exists as $\Gamma$ has at least $3$ points per lines. 
Let $L_2$ be the line incident to $p_1$ and $p_2$ and let $L_3$ be the line incident to $p_2$ and $p_3$ (see Figure~\ref{ag}).
Then $(p_1,L_1,1), (p_2,L_2,2), (p_3,L_3,3), (p_1',L_1,1)$ is an incidence chain in $\Delta(\Gamma)$.

\begin{figure}
\begin{center}
\begin{tikzpicture}
    \draw [blue, very thick] (-1,3) -- (3,-1) ;
    \draw [green, very thick] (-3,0) -- (3,0) ;
    \draw [red, very thick] (-3,-1) -- (1,3);

    \draw (-2.2,0) node[above]{$p_1$};
    \draw (2.2,0) node[above]{$p_2$};
    \draw (0,2.2) node[above]{$p_3$};
    \draw (-3,-1) node[left]{$p_1'$};
    \draw (1,3) node[right]{$L_1$};
    \draw (3,0) node[right]{$L_2$};
    \draw (-1,3) node[left]{$L_3$};
    
    \node at (-2,0)[circle,fill,inner sep=1.5pt,color = red]{};
    \node at (-3,-1)[circle,fill,inner sep=1.5pt,color = red]{};
    \node at (0,2)[circle,fill,inner sep=1.5pt,color = blue]{};
    \node at (2,0)[circle,fill,inner sep=1.5pt,color = green]{};
    
\end{tikzpicture}
\caption{A path from $(p_1,L_1,1)$ to $(p_1',L_1,1)$.}\label{ag}
\end{center}
\end{figure}

For the second claim, take $p_2$ a point not incident to $L_1$ nor $L_1'$. Let $p_3 \star L_1'$ with $p_3$ not incident to $L_2$. Finally let $L_3$ be the line of $\Gamma$ incident to $p_2$ and $p_3$ (see Figure~\ref{ag2}).
Then $(p_1,L_1,1), (p_2,L_2,2) , (p_3,L_3,3), (p_1,L_1',1)$ is an incidence chain in $\Delta(\Gamma)$.
\begin{figure}
\begin{center}
\begin{tikzpicture}

    \draw [red, very thick] (-3,-1) -- (0,2) ;
    \draw [blue, very thick] (3,1) -- (-1,-3) ;
    \draw [green, very thick] (-3,0) -- (3,0) ;
    \draw [red, very thick] (-3,1) -- (1,-3);
    
    \draw (-2,0.1) node[above]{$p_1$};
    \draw (2,0.1) node[above]{$p_2$};
    \draw (0,-1.9) node[above]{$p_3$};
    \draw (0,2) node[right]{$L_1$};
    \draw (3,0) node[right]{$L_2$};
    \draw (-1,-3) node[left]{$L_3$};
    \draw (-3,1) node[left]{$L_1'$};

    \node at (-2,0)[circle,fill,inner sep=1.5pt,color = red]{};
    \node at (0,-2)[circle,fill,inner sep=1.5pt, color = blue]{};
    \node at (2,0)[circle,fill,inner sep=1.5pt, color = green]{};

\end{tikzpicture}
\caption{A path from $(p_1,L_1,1)$ to $(p_1,L_1',1)$.}\label{ag2}
\end{center}
\end{figure}

Since $\Delta(\Gamma)$ is a geometry, every element of a given type is incident to two elements of the other types. Since $\Gamma$ is itself a connected (points and lines) incidence geometry, using the previous observations we can construct an incidence chain from any element to any other element of $\Delta(\Gamma)$ by composing the previous operations. Hence, $\Delta(\Gamma)$ is connected.
\end{proof}

\begin{proposition}
    Let $n\geq 3$. If $\Gamma = \AG(n,q)$ or $\Gamma = \PG(n,q)$, then $\Delta(\Gamma)$ is not residually connected
\end{proposition}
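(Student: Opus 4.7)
The plan is to show that the rank-two residue of a single type-$1$ element is already disconnected, which suffices to demonstrate that $\Delta(\Gamma)$ is not residually connected. By Theorem~\ref{mastertheorem} the geometry $\Delta(\Gamma)$ is flag-transitive, so any one element of type $1$ will do; I fix $x=(p,L,1)$.

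The first step is to unpack Construction~\ref{construct} and describe $\text{Res}_{\Delta(\Gamma)}(x)$ concretely. Its type-$2$ elements are the triples $(p',L',2)$ with $L'\ni p$, $L'\neq L$ and $p'\in L'\setminus\{p\}$, and its type-$3$ elements are the triples $(p'',L'',3)$ with $p''\in L\setminus\{p\}$ and $L''\ni p''$, $L''\neq L$. A short case analysis, using that $\Gamma$ is a linear space so two distinct lines meet in at most one point, shows that a type-$2$ element $(p',L',2)$ and a type-$3$ element $(p'',L'',3)$ are incident in the residue if and only if $L''=[p',p'']$, the unique line of $\Gamma$ joining $p'$ to $p''$. (Note that $p'\notin L$: otherwise $p'\in L\cap L'=\{p\}$ would contradict $p'\neq p$.)

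The key step is to introduce a plane-valued invariant $\Pi$ on the residue by setting
\[
\Pi(p',L',2):=\langle L,p'\rangle,\qquad \Pi(p'',L'',3):=\langle L,L''\rangle,
\]
where $\langle\cdot\rangle$ is the projective, respectively affine, subspace spanned by its arguments. Because $p'\notin L$ and $L''\neq L$, each of these is a genuine $2$-dimensional subspace of $\Gamma$ containing $L$. The crucial observation is that if $(p',L',2)$ is incident to $(p'',L'',3)$ in the residue, then $L''=[p',p'']$ with $p''\in L$ and $p'\in\langle L,p'\rangle$, so $L''\subset\langle L,p'\rangle$ and therefore $\langle L,L''\rangle=\langle L,p'\rangle$. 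Hence $\Pi$ is constant on connected components of $\text{Res}_{\Delta(\Gamma)}(x)$.

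Finally, since $n\ge 3$, the number of planes of $\Gamma$ through $L$ is $(q^{n-1}-1)/(q-1)\ge q+1\ge 2$, so at least two values of $\Pi$ are attained on the residue. Picking type-$2$ elements $(p_1',L_1',2)$ and $(p_2',L_2',2)$ with $p_1'$ and $p_2'$ lying in distinct planes through $L$, we obtain two elements in different components, so $\text{Res}_{\Delta(\Gamma)}(x)$ is disconnected and $\Delta(\Gamma)$ is not residually connected. The only real risk is making sure the ``plane'' argument is airtight in the affine case: one must check that when $p'\notin L$ the span $\langle L,p'\rangle$ is a well-defined affine plane and that any line joining a point of $L$ to $p'$ lies inside it, which is all that the invariance argument needs.
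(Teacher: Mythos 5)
Your proof is correct and follows essentially the same route as the paper: both arguments rest on the observation that, in the residue of a type-$1$ element $(p,L,1)$, the plane through $L$ determined by the other element of an incident pair is an invariant of connected components, and that for $n\geq 3$ there is more than one plane through $L$. Your write-up merely makes explicit the residue description and the well-definedness of the plane-valued invariant that the paper's two-line proof leaves implicit.
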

\begin{proof}
Let $F = \{(p_1,L_1,1),(p_2,L_2,2)\}$ be a rank two flag of $\Delta(\Gamma)$. An element $(p_3,L_3,3)$ incident to $F$ will have $L_3$ contained in the plane spanned by $L_1$ and $L_2$. Since there are multiple planes containing $L_1$, the residue of $(p_1,L_1,1)$ will split in distinct connected components, one for each plane containing $L_1$.
\end{proof}

\begin{proposition}\label{appprc}
If $\Gamma = \AG(2,q)$ with $q\geq 3$ or $\Gamma = \PG(2,q)$ with $q\geq 2$, then 
    $\Delta(\Gamma)$ is residually connected.
\end{proposition}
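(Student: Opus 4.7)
The plan is to reduce to showing that the residue of a single element is connected. Connectedness of $\Delta(\Gamma)$ itself is already established in Proposition~\ref{connected}, and the canonical triality $\tau$ from Proposition~\ref{triality} cyclically permutes the three types, so residues of type~2 and type~3 elements are isomorphic to residues of type~1 elements; it suffices to check that $\text{Res}_{\Delta(\Gamma)}((p,L,1))$ is connected for an arbitrary type~1 element. Unpacking Construction~\ref{construct}, this residue is a bipartite graph whose type~2 vertices are the points $p' \notin L$ (carrying the implicit line $pp'$) and whose type~3 vertices are the lines $L''$ that avoid $p$ and meet $L$ in some point $p'' \neq p$, with adjacency given by $p' \in L''$.

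For $\Gamma = \PG(2,q)$ with $q \geq 2$, I would argue directly. Given two type~2 vertices $p'_1, p'_2$, let $M$ be the unique line through them. If $p \notin M$, then $M$ itself is a type~3 vertex containing both. Otherwise $p \in M$, and I would pick $p'_3 \notin L \cup M$ (available since $|L \cup M| = 2q+1$ while the plane has $q^2+q+1$ points, leaving at least $q^2 - q \geq 2$ choices). The lines $p'_1 p'_3$ and $p'_2 p'_3$ then avoid $p$ and differ from $L$, so they are type~3 vertices linking $p'_1$ and $p'_2$ through $p'_3$.

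For $\Gamma = \AG(2,q)$ with $q \geq 3$, I would fix coordinates with $p = (0,0)$ and $L = \{y = 0\}$. The type~3 vertices then split cleanly into the $q-1$ verticals $x = c$ with $c \neq 0$ and the $(q-1)^2$ non-vertical, non-horizontal lines $y = mx + c$ with $m, c \neq 0$. Partitioning the type~2 vertices by $x$-coordinate into columns $V_a = \{(a,b) : b \neq 0\}$, each $V_a$ with $a \neq 0$ sits on the type~3 vertex $x = a$ and hence forms one connected piece in the bipartite residue. To bring in the exceptional column $V_0$ and fuse all columns together, I would observe that for any $a \neq 0$, any $c \neq 0$ and any $b \in \mathbb{F}_q^{*} \setminus \{c\}$ (non-empty precisely because $q \geq 3$), the line $y = \frac{b-c}{a}\, x + c$ is a valid type~3 vertex incident to both $(0,c) \in V_0$ and $(a,b) \in V_a$, so $V_0$ is joined to every other $V_a$ within a single component. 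Finally, every type~3 line contains $q-1 \geq 2$ points off $L$, so every type~3 vertex is adjacent to a type~2 vertex, completing the connectivity of the full bipartite residue.

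The main obstacle is the affine case. Parallel lines obstruct the naive join-two-points trick from the projective setting and create the exceptional column $V_0$ on the $y$-axis, through which no vertical type~3 line can pass. The hypothesis $q \geq 3$ enters precisely to guarantee enough usable slopes and intercepts so that non-vertical type~3 lines are available to connect $V_0$ to every other column.
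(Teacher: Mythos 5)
Your proof is correct, and its global skeleton matches the paper's: connectedness of $\Delta(\Gamma)$ itself comes from Proposition~\ref{connected}, the canonical triality reduces everything to residues of type~$1$ elements, and it then suffices to link any two type~$2$ elements of such a residue. Your explicit identification of $\mathrm{Res}_{\Delta(\Gamma)}((p,L,1))$ with the bipartite graph on points off $L$ versus lines avoiding $p$ and meeting $L$, with adjacency given by containment, is a clean reformulation that the paper leaves implicit; in the projective case your argument then essentially coincides with the paper's (join the two points by a line, and detour through a third point outside $L\cup M$ when that line happens to pass through $p$). The genuine divergence is in the affine case: the paper argues synthetically, splitting into cases according to which joining lines are parallel to $L_1$, and must dispose of $\AG(2,3)$ by a separate hand computation because its auxiliary construction breaks down there; you instead coordinatize, let the pencil of verticals $x=c$, $c\neq 0$, make each column $V_a$ with $a\neq 0$ connected in one step, and then fuse in the exceptional column $V_0$ via the lines $y=\frac{b-c}{a}x+c$ with $b,c\neq 0$, $b\neq c$. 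This buys a uniform treatment of all $q\geq 3$, including $q=3$, at the price of a coordinate-dependent rather than synthetic argument. Both proofs are complete; yours arguably packages the affine case more tightly.
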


\begin{proof}
By Propositon~\ref{triality}, $\Delta(\Gamma)$ has a triality, hence it suffices to show that residues of a flag of type 1 are connected.
Moreover, since $\Delta(\Gamma)$ is a geometry, it suffices to show that, given any two elements of type 2 in the residue of a flag of type 1, there is a chain in that residue that connects them.

    Let $\Delta(\Gamma)_{(p_1,L_1,1)}$ be the residue of $(p_1,L_1,1)$. Let $(p_2,L_2,2) \in \Delta(\Gamma)_{(p_1,L_1,1)}$ and $(p_2',L_2',2) \in \Delta(\Gamma)_{(p_1,L_1,1)}$. Note that if $p_2 = p_2'$ then $L_2 = L_2'$ so nothing needs to be checked.
    
Assume first that $L_2 = L_2'$ and $p_2 \neq p_2'$.
Then the elements $(p_2,L_2,2)$ and $(p_2',L_2,2)$ clearly cannot be at distance $2$ from each other in the incidence graph of $\Delta(\Gamma)_{(p_1,L_1,1)}$.
Suppose there exist $q_3,q_3' \star L_1$ so that the lines $L_3 = \big[ p_2,q_3 \big]$ and $L_3' = \big[ p_2',q_3' \big]$ intersect. Name that intersection $p_2''$ (see Figure~\ref{l2l2}).  The chain $$(p_2,L_2,2) , (q_3,L_3,3) , (p_2'' , L_2'' , 2), (q_3',L_3',3),(p_2',L_2,2)$$
is an incidence chain in $\Gamma_{(p_1,L_1,1)}$. We conclude that the distance between $(p_2,L_2,2)$ and $(p_2',L_2,2)$ in the incidence graph of $\Gamma_{(p_1,L_1,1)}$ is 4.

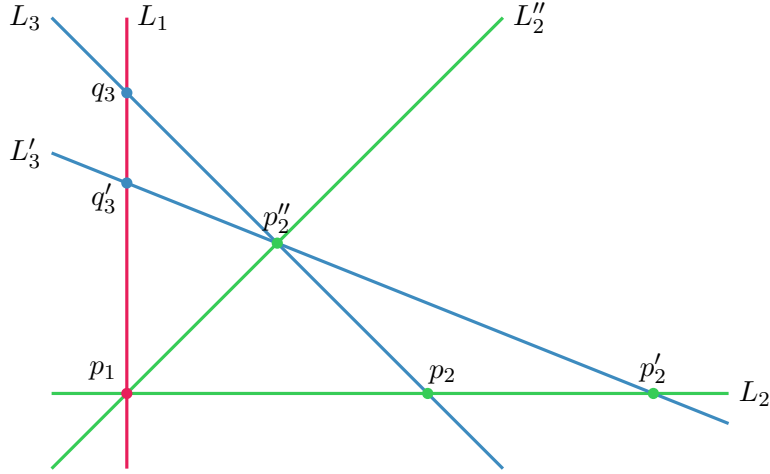
\begin{figure}
\begin{center}
    
\begin{tikzpicture}
    
    \draw [red, very thick] (-2,-3) -- (-2,3) ;
    \draw [green, very thick] (-3,-2) -- (6,-2) ;
    \draw [blue, very thick] (-3,3) -- (3,-3) ;
    \draw [green, very thick] (-3,-3) -- (3,3);
    \draw [blue, very thick] (-3,1.2) -- (6,-2.4);
    
    \draw (-2,3) node[right]{$L_1$};
    \draw (6,-2) node[right]{$L_2$};
    \draw (-3,3) node[left]{$L_3$};
    \draw (3,3) node[right]{$L_2''$};
    \draw (-3,1.2) node[left] {$L_3'$}; 
    
    \draw (-2,-1.7) node[left]{$p_1$};
    \draw (2.2,-2) node[above]{$p_2$};
    \draw (-2,2) node[left]{$q_3$};
    \draw (0,0) node[above]{$p_2''$};`
    \draw (-2,0.6) node[left]{$q_3'$};
    \draw (5,-2) node[above]{$p_2'$};`
    
    \node at (0,0)[circle,fill,inner sep=1.5pt,color=green]{};
    \node at (-2,-2)[circle,fill,inner sep=1.5pt,color=red]{};
    \node at (2,-2)[circle,fill,inner sep=1.5pt,color=green]{};
    \node at (-2,2)[circle,fill,inner sep=1.5pt,color=blue]{};
    \node at (-2,0.8)[circle,fill,inner sep=1.5pt,color=blue]{};
    \node at (5,-2)[circle,fill,inner sep=1.5pt,color=green]{};

\end{tikzpicture}
\caption{The case where $L_2 = L_2'$.}\label{l2l2}
\end{center}
\end{figure}
Suppose the above construction fails, that is we cannot find $p_3$ and $p_3'$ as requested since $L_3 = \big[ p_2,p_3 \big]$ is parallel to $L_3' = \big[p_2',p_3'\big]$. In that case, we are dealing with $\AG(2,3)$ (as if there were more than 3 points per line, we could take another point $p_3'$ to make sure $L_3$ and $L_3'$ were not parallel).
This case is small enough to be fully analysed by hand and check that the residue has then the incidence graph of a 6-gon.

Assume finally that $L_2\neq L_2'$ and $p_2\neq p_2'$.
Take $L_3 = \big[ p_2,p_2' \big]$.
If $L_3$ and $L_1$ have a point in common, let $q_3 = L_3 \cap L_1$ (see Figure~\ref{notp}). Then $(p_2,L_2,2), (q_3,L_3,3), (p_2',L_2',2)$ forms an incidence chain so $(p_2,L_2,2)$ and $(p_2',L_2',2)$ are at distance $2$.
\begin{figure}
\begin{center}
\begin{tikzpicture}
    \draw (1,1) node[right]{$L_2'$};
    
    \draw [green, very thick] (-3,-3) -- (1,1) ;
    \draw [red, very thick] (-2,-3) -- (-2,1) ;
    \draw [blue, very thick] (-3,1) -- (1,-3) ;
    \draw [green, very thick] (-3,-2) -- (1,-2) ;
    \draw (-2,1) node[right]{$L_1$};
    \draw (1,-2) node[right]{$L_2$};
    \draw (-3,1) node[left]{$L_3$};
    
    \draw (-2,-1.7) node[left]{$p_1$};
    \draw (-2,0) node[left]{$q_3$};
    \draw (0,-2) node[above]{$p_2$};`
    \draw (-1,-1) node[above]{$p_2'$};`
    \node at (0,-2)[circle,fill,inner sep=1.5pt,color=green]{};
    \node at (-2,-2)[circle,fill,inner sep=1.5pt,color=red]{};
    \node at (-1,-1)[circle,fill,inner sep=1.5pt,color=green]{};
    \node at (-2,0)[circle,fill,inner sep=1.5pt,color=blue]{};
    
\end{tikzpicture}
\caption{The case where $\big[ p_2, p_2' \big] \cap L_1 \neq \emptyset$ and $p_2' \notin L_2$. }\label{notp}
\end{center}
\end{figure}
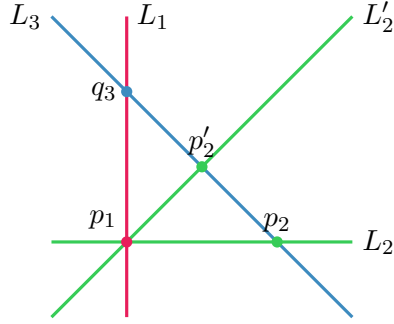

In $\AG(2,q)$, it could happen that $\big[ p_2,p_2' \big]$ is parallel to $L_1$ and the previous construction fails. 
In that case, pick a third point on $\big[ p_2,p_2' \big]$, say $p_2''$ and take the line $L_2'' = \big[ p_1,p_2'' \big]$.
On that line, take a third point $q_2$. Now, the line $L_3 = \big[p_2,q_2\big]$ necessarily intersects $L_1$ in, say $q_3$. Moreover, the line $L_3' = \big[p_2',q_2\big]$ also intersects $L_1$ in, say $q_3'$.
We have that $(p_2,L_2,2), (q_3,L_3,3), (q_2,L_2",2), (q_3',L_3',3), (p_2',L_2',2)$ is a chain in $\Delta(\Gamma)_{(p_1,L_1,1)}$, as illustrated in Figure~\ref{l2l2p}.
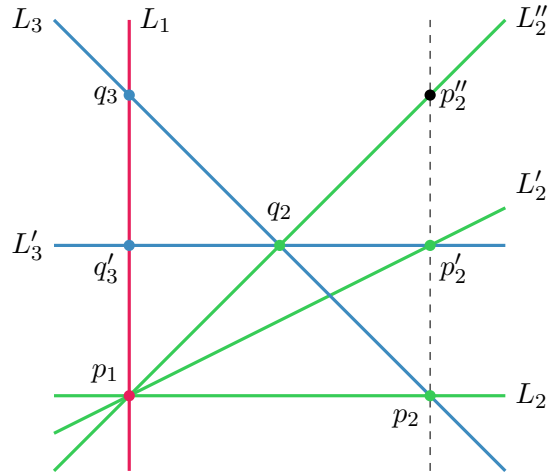
\begin{figure}
\begin{center}
    
\begin{tikzpicture}

    \draw [green, very thick] (-3,-2.5) -- (3,0.5) ;
    \draw [red, very thick] (-2,-3) -- (-2,3) ;
    \draw [green, very thick] (-3,-2) -- (3,-2) ;
    \draw [blue, very thick] (-3,3) -- (3,-3) ;
    \draw [green, very thick] (-3,-3) -- (3,3);
    \draw [dashed] (2,-3) -- (2,3);
    \draw (-2,3) node[right]{$L_1$};
    \draw (3,-2) node[right]{$L_2$};
    \draw [blue, very thick] (-3,0) -- (3,0);
    \draw (3,0.8) node[right] {$L_2'$}; 
    \draw (3,3) node[right]{$L_2''$};
    \draw (-3,0) node[left]{$L_3'$};
    \draw (-3,3) node[left]{$L_3$};
    
    \draw (-2,-1.7) node[left]{$p_1$};
    \draw (2,-2.3) node[left]{$p_2$};
    \draw (2,-0.3) node[right]{$p_2'$};
    \draw (-2,2) node[left]{$q_3$};
    \draw (2,2) node[right]{$p_2''$};`
    \draw (0,0.2) node[above]{$q_2$};
    \draw (-2,-0.3) node[left]{$q_3'$};
    \node at (0,0)[circle,fill,inner sep=1.5pt,,color=green]{};
    \node at (-2,-2)[circle,fill,inner sep=1.5pt,color=red]{};
    \node at (2,-2)[circle,fill,inner sep=1.5pt,color=green]{};
    \node at (-2,2)[circle,fill,inner sep=1.5pt,color=blue]{};
    \node at (2,0)[circle,fill,inner sep=1.5pt,color=green]{};
    \node at (-2,0)[circle,fill,inner sep=1.5pt,color=blue]{};
    \node at (2,2)[circle,fill,inner sep=1.5pt,color=black]{};

\end{tikzpicture}
\caption{The case where $\big[ p_2,p_2' \big]$ is parallel to $L_1$.}\label{l2l2p}
\end{center}
\end{figure}
\end{proof}

\begin{corollary}\label{residuegdd}
If $\Gamma = \AG(2,q)$ with $q \geq 4$ or $\Gamma = \PG(2,q)$ with $q \geq 3$, then the gonality of any rank two residue of $\Delta(\Gamma)$ is equal to $3$ and the point and line diameters are equal to $4$.
\end{corollary}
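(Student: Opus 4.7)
By Proposition~\ref{triality}, the canonical triality $\tau$ acts transitively on the three types, so every rank-two residue is isomorphic to the residue $R := \Delta(\Gamma)_{(p_1,L_1,1)}$ of an element of type $1$. Unraveling Construction~\ref{construct}, the elements of $R$ of type $2$ correspond bijectively to points $a\in \Gamma\setminus L_1$ (the second coordinate of such an element is then forced to equal $[p_1,a]$), while the elements of type $3$ are pairs $(p_3,L_3)$ with $p_3\in L_1\setminus\{p_1\}$ and $L_3$ a line through $p_3$ distinct from $L_1$; incidence between $(a,\cdot,2)$ and $(p_3,L_3,3)$ holds exactly when $a\in L_3$. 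The proof reduces to analyzing cycles and distances in this bipartite incidence graph.

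To pin down the gonality I would first rule out $4$-cycles. If $(a,\cdot,2)\sim(p_3,L_3,3)\sim(a',\cdot,2)\sim(p_3',L_3',3)\sim(a,\cdot,2)$ were a simple $4$-cycle, then $a,a'\in L_3\cap L_3'$ with $a\neq a'$, forcing $L_3=L_3'$ by the linear space axiom; then $p_3=L_3\cap L_1=p_3'$, contradicting properness. To exhibit a $6$-cycle I would choose three pairwise distinct, non-collinear points $a_1,a_3,a_5\in\Gamma\setminus L_1$ such that each line $\ell_{ij}:=[a_i,a_j]$ meets $L_1$ at a point distinct from $p_1$; the hexagon alternating between the $(a_i,\cdot,2)$ and the $(\ell_{ij}\cap L_1,\ell_{ij},3)$ is then a simple cycle of length $6$. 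In $\PG(2,q)$, the constraints $a_3\notin L_1\cup[p_1,a_1]$ and the analogous ones for $a_5$ amount to avoiding a union of at most four lines, leaving $(q-1)(q-2)\geq 2$ valid choices for $a_5$ as soon as $q\geq 3$. In $\AG(2,q)$, the same scheme works provided we additionally forbid the lines $\ell_{ij}$ from being parallel to $L_1$, which removes at most two further lines through $a_1$ and $a_3$; a direct count shows enough room remains when $q\geq 4$.

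For the diameters, the proof of Proposition~\ref{appprc} already shows that any two type-$2$ vertices are at distance at most $4$, with pairs $(p_2,L_2,2),(p_2',L_2,2)$ having $p_2\neq p_2'$ realising equality (no type-$3$ vertex can be incident to both, for the unique line through $p_2,p_2'$ is $L_2$ which passes through $p_1$, whereas any admissible $L_3$ meets $L_1$ outside $p_1$). It remains to check that a non-incident type-$2$ and type-$3$ pair $(p_2,L_2,2),(p_3,L_3,3)$ are at distance $3$. I would pick $q_3\in L_1\setminus\{p_1,p_3\}$ so that $M:=[p_2,q_3]$ is not parallel to $L_3$ (in $\AG(2,q)$ at most one choice of $q_3$ is bad, and $|L_1\setminus\{p_1,p_3\}|\geq q-2\geq 2$ when $q\geq 4$), set $q_2:=M\cap L_3$ and verify the chain
$$(p_2,L_2,2)\sim(q_3,M,3)\sim(q_2,[p_1,q_2],2)\sim(p_3,L_3,3).$$
Checking that $q_2\notin\{p_2,p_3,q_3\}$ and $q_2\notin L_1$ reduces each time to the fact that $p_2\notin L_1$ together with $L_3\cap L_1=\{p_3\}$. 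This yields $d_P,d_L\leq 4$, and combined with the distance-$4$ pair above we obtain $d_P=d_L=4$.

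The main obstacle is the affine case, where two distinct parallelism issues (parallelism to $L_1$ in building the $6$-cycle and parallelism to $L_3$ in building the length-$3$ path between a non-incident type-$2$ and type-$3$ pair) must both be avoided; a careful count shows both become tractable precisely at $q\geq 4$, in agreement with the fact that $\AG(2,3)$ is excluded (its residues are ordinary hexagons rather than generalized $3$-gon-like configurations). The projective case is smoother since any two lines meet, and only $q\geq 3$ is required to have enough points away from $L_1$ in sufficiently general position.
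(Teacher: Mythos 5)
Your proof is correct and follows essentially the same route as the paper's: reduce to a single residue via the canonical triality, reuse Proposition~\ref{appprc} for the bound on distances between type-$2$ elements, build an explicit length-$3$ chain from a type-$2$ to a type-$3$ element, and exhibit a $6$-cycle (a triangle of points off $L_1$ whose sides meet $L_1$ away from $p_1$) while excluding $4$-cycles. Your explicit identification of the residue as a bipartite incidence structure and your counting arguments for the existence of the triangle make precise what the paper handles by direct construction, but the underlying configuration and argument are the same.
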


\begin{proof}
By Propositon~\ref{triality}, $\Delta(\Gamma)$ has a triality, hence we know that all rank two residues will be isomorphic. Suppose we look at a residue of a flag of type 1.
By the proof of Proposition~\ref{appprc}, we already know the distance between two elements of type $2$ in $\Delta(\Gamma)_{(p_1,L_1,1)}$ is less than or equal to $4$. 
Let $(p_2,L_2,2), (p_3,L_3,3) \in \Delta(\Gamma)_{(p_1,L_1,1)}$. We claim that $d_{\Delta(\Gamma)_{(p_1,L_1,1)}}((p_2,L_2,2),(p_3,L_3,3)) \leq 3$. Indeed, take $q_3 \in L_1$ with $q_3\notin \{ p_1,p_3 \}$ such that $\big[ p_2, q_3 \big]$ is not parallel to $L_3$.
(note that this is not possible in $\AG(2,3)$). Construct in order $L_3' = \big[ p_2,q_3 \big]$, $p_2' = L_3' \cap L_3$, $L_2' = \big[ p_2',p_1 \big]$. Then 
$$(p_2,L_2,2),(q_3,L_3',3),(p_2',L_2',2),(p_3,L_3,3)$$
is an incidence chain in $\Delta(\Gamma)_{(p_1,L_1,1)}$ (see Figure~\ref{dist3}). 
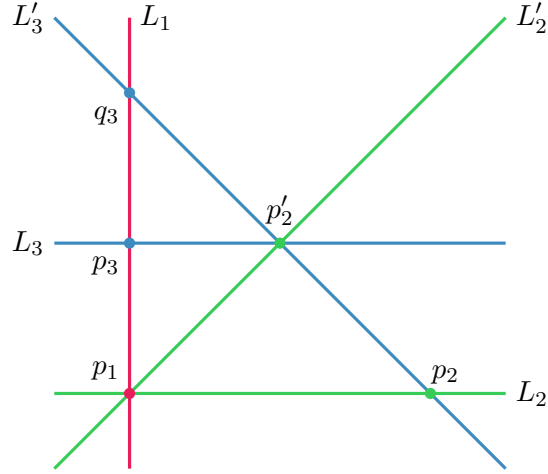
\begin{figure}
\begin{center}
    
\begin{tikzpicture}
    \draw [red, very thick] (-2,-3) -- (-2,3) ;
    \draw [green, very thick] (-3,-2) -- (3,-2) ;
    \draw [blue, very thick] (-3,3) -- (3,-3) ;
    \draw [green, very thick] (-3,-3) -- (3,3);
    \draw [blue, very thick] (-3,0) -- (3,0);

    \draw (-2,-1.7) node[left]{$p_1$};
    \draw (2.2,-2) node[above]{$p_2$};
    \draw (-2,1.7) node[left]{$q_3$};
    \draw (0,0.1) node[above]{$p_2'$};
    \draw (-2,-0.3) node[left]{$p_3$};`
    
    \node at (0,0)[circle,fill,inner sep=1.5pt,color = green]{};
    \node at (-2,-2)[circle,fill,inner sep=1.5pt,color = red]{};
    \node at (2,-2)[circle,fill,inner sep=1.5pt,color = green]{};
    \node at (-2,2)[circle,fill,inner sep=1.5pt,color = blue]{};
    \node at (-2,0)[circle,fill,inner sep=1.5pt,color = blue]{};
    
    \draw (-2,3) node[right]{$L_1$};
    \draw (3,-2) node[right]{$L_2$};
    \draw (-3,0) node[left]{$L_3$};
    \draw (3,3) node[right]{$L_2'$};
    \draw (-3,3) node[left] {$L_3'$};    
    
\end{tikzpicture}
\caption{A path of length three.}\label{dist3}
\end{center}

\end{figure}
Hence the point and line diameter are equal to $4$.

To compute the gonality, notice we have at least $4$ points per lines. Choose three points $q_3 , q_3', q_3'' \star L_1$ and $q_3,q_3',q_3'' \neq p_1$. Construct the lines $L_{3} = \big[ q_3 , p_2 \big]$ and $L_{3}' = \big[ q_3' , p_2 \big]$. There are at least $2$ points in $L_{3}' / \{ q_3' , p_2 \}$ so at least one of the two lines passing by $q_3''$ and the first point and $q_3''$ and the second point will be non parallel to $L_{3}$ (you do not need to worry about this case in the projective plane). Name that line $L_{3}''$. Let $p_2' = L_3' \cap L_3''$ and $p_2'' = L_3 \cap L_3''$. Then $$(p_2,L_2,2),(q_3,L_{3},3),(p_2'',L_2'',2)(q_3'',L_{3}'',3),(p_2',L_2',2),(q_3',L_{3}',3),(p_2,L_2,2)$$ is an incidence chain of length $6$ in $\Delta(\Gamma)_{(p_1,L_1,1)}$ (see Figure~\ref{gon}).

One can easily verify that cycles based in $(p_2,L_2,2)$ of length $4$ cannot exist in $\Delta(\Gamma)_{(p_1,L_1,1)}$. Moreover, the same construction can be used for cycles with basepoint $(p_3,L_3,3)$. We conclude that the gonality is $3$ which completes the proof.
\begin{figure}

\begin{center}
  
\begin{tikzpicture}[scale = 0.7]
    
    \draw [blue, very thick] (-3,4+6/7) -- (6,-2-6/7) ;
    \draw [blue, very thick] (-3,1+3/7) -- (6,-2-3/7) ;
    \draw [blue, very thick] (-3,-51/8) -- (3+2.0635,158/80+2.0635*1.3967) ;
    
    \draw [green, very thick](-3,-2) -- (6,-2) ;
    \draw [green, very thick](-3,-2.5) -- (1.32+4.73,-0.4+4.73*0.468111);
    \draw [green, very thick](-3,-2.7) -- (2+4,0.6+2.64);

    \draw [red, very thick](-2,4+6/7) -- (-2,-51/8);
    
    \draw (-3,4+6/7) node[left]{$L_{3}$};
    \draw (-3,1+3/7) node[left]{$L_{3}'$};
    \draw (6,-2) node[right]{$L_{2}$};
    \draw (-3,-51/8) node[left]{$L_{3}''$};
    \draw (1.32+4.73,-0.4+4.73*0.468111) node[right]{$L_2'$};
    \draw (2+4,0.6+2.64) node[right]{$L_2''$};
    \draw (-2,4+6/7) node[right]{$L_1$};
    
    \draw (-2,3.7) node[left]{$q_3$};
    \draw (-2,0.7) node[left]{$q_3'$};
    \draw (-2,-1.7) node[left]{$p_{1}$};
    \draw (-2,-4.7) node[left]{$q_3''$};
`   \draw (2,0.9) node[above]{$p_{2}''$};
    \draw (1.4,-0.6) node[below]{$p_{2}'$};
    \draw (5.3,-2) node[above]{$p_2$};

    \node at (1.32,-0.4)[circle,fill,inner sep=1.5pt,color=green]{};
    \node at (-2,4)[circle,fill,inner sep=1.5pt,color=blue]{};
    \node at (-2,1)[circle,fill,inner sep=1.5pt,color=blue]{};
    \node at (-2,-2)[circle,fill,inner sep=1.5pt,color=red]{};
    \node at (-2,-5)[circle,fill,inner sep=1.5pt,color=blue]{};
    \node at (2,0.6)[circle,fill,inner sep=1.5pt,color=green]{};
    
    \node at (5,-2)[circle,fill,inner sep=1.5pt,color=green]{};
    
\end{tikzpicture}
\caption{A cycle with basepoint $(p_2,L_2,2)$ of length six in $\Delta(\Gamma)_{(p_1,L_1,1)}$.}\label{gon}
\end{center}

\end{figure}
\end{proof}

\begin{proposition}
    If $\Gamma = \Kv_v$, $v \geq 4$ then $\Delta(\Gamma)$ is connected and not residually connected.
\end{proposition}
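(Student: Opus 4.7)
The plan is to treat the two statements separately, both relying on the fact that every line of $K_v$ has exactly two points.

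For non-residual connectedness, I would analyse the residue of a single type $1$ element $(p_1, L_1, 1)$, writing $L_1 = \{p_1, \bar p_1\}$. Unwinding Construction~\ref{construct}, the type $2$ elements in this residue are exactly the $(a, \{a, p_1\}, 2)$ for $a \notin \{p_1, \bar p_1\}$, and the type $3$ elements are the $(\bar p_1, \{\bar p_1, d\}, 3)$ for $d \notin \{p_1, \bar p_1\}$, giving $v-2$ elements of each type. A direct computation shows that $(a, \{a, p_1\}, 2)$ is incident to $(\bar p_1, \{\bar p_1, d\}, 3)$ if and only if $a = d$, so the residue is a perfect matching on $2(v-2)$ vertices; for $v \geq 4$ its incidence graph has $v - 2 \geq 2$ connected components.

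For connectedness, Proposition~\ref{triality} reduces the problem to showing that any two type $1$ elements lie in the same component. I would identify $(p, \{p, q\}, 1)$ with the directed edge $p \to q$, the distinguished point being the tail and the other endpoint the head. A routine check yields two basic moves: the length-$2$ path $(p,\{p,q\},1), (a,\{a,p\},2), (p,\{p,q'\},1)$ changes the head from $q$ to any $q' \neq p$ while fixing the tail, provided $a$ can be chosen outside $\{p,q,q'\}$; symmetrically, a length-$2$ detour through a type $3$ element changes the tail while fixing the head. For $v \geq 4$ the required auxiliary vertex always exists.

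The main obstacle is that, because $K_v$ has only two points per line, these moves never simultaneously change both the tail and the head, and certain source/target pairs resist a length-$4$ connection. To join $p \to q$ to $p' \to q'$ one may change head then tail (valid when $q' \neq p$) or tail then head (valid when $p' \neq q$); the sole obstruction is the reversed-edge case $p' = q$ and $q' = p$, where both orderings would force an invalid intermediate of the form $x \to x$. In that case I would use three successive moves, routing $p \to q \leadsto p \to r \leadsto q \to r \leadsto q \to p$ through any $r \in V(K_v) \setminus \{p,q\}$, whose existence is again guaranteed by $v \geq 4$.
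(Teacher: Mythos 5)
Your proof is correct, and it takes a noticeably different route from the paper's on both halves. For non-residual connectedness the paper simply asserts the conclusion from the fact that lines have two points, whereas you actually compute the rank-two residue of $(p_1,L_1,1)$ and identify it as a perfect matching on $2(v-2)$ vertices; your incidence computation ($(a,\{a,p_1\},2)$ meets $(\bar p_1,\{\bar p_1,d\},3)$ iff $a=d$) is right, and this makes the claim genuinely verified rather than left to the reader. For connectedness the paper exhibits two explicit length-four chains, one joining $(p,L,i)$ to $(p,L,i\bmod 3+1)$ (linking the three copies) and one joining $(p,L,i)$ to $(\bar p,L,i)$ (reversing the edge), and then declares connectedness "readily seen"; you instead work entirely inside type $1$, encode $(p,\{p,q\},1)$ as a directed edge, and generate everything from head-change and tail-change moves, explicitly isolating the one configuration ($p'=q$, $q'=p$) where a two-move composition fails and resolving it with a three-move detour. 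That reversed-edge case is exactly the paper's second chain, so the two arguments meet there, but your version makes the case analysis airtight where the paper waves its hands. The one point to tidy up is your reduction to type-$1$ elements: Proposition~\ref{triality} alone does not give it, since the canonical triality only permutes the set of connected components; what you actually need is that $\Delta(K_v)$ is a geometry (because $K_v$ is a linear space), so every element of type $2$ or $3$ lies in a chamber and hence is incident to a type-$1$ element --- with that citation corrected, the reduction is exactly the one the paper itself uses for $\AG(n,q)$ and $\PG(n,q)$.
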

\begin{proof}
Since every line has exactly two points, $\Delta(\Gamma)$ is not residually connected. 
We first prove there is an incidence chain from $(p,L, i )$ to $(p,L, i \bmod{3} +1)$ in $\Delta(\Gamma)$. Secondly, we prove there is an incidence chain from $(p,L,i)$ to $(\bar{p},L,i)$ in $\Delta(\Gamma)$. 
Without loss of generality, take $(p_1,L_1,1) \in X_{\Delta(\Gamma)}$. Let $p_2,p_3 \notin \text{Res}_\Gamma(L_1)$.

For the first claim, construct $L_2 = \big[p_1,p_2\big], L_3 = \big[p_2,p_3\big],L_1' = \big[p_3,\bar{p_1} \big]$. Then, $$(p_1,L_1,1),(p_2,L_2,2),(p_3,L_3,3),(\bar{p_1},L_1',1),(p_1,L_1,2)$$ is an incidence chain in $\Delta(\Gamma)$ (see Figure~\ref{Kncon1}).

\begin{figure}
\begin{center}
\begin{tikzpicture}
    \draw (-3,0) node[left]{$L_1'$};
    
    \draw [red, very thick] (-3,0) -- (1,0) ;
    \draw [red, very thick] (-2,-3) -- (-2,1) ;
    \draw [blue, very thick] (0,-3) -- (0,1) ;
    \draw [green, very thick] (-3,-2) -- (1,-2) ;
    \draw (-2,1) node[right]{$L_1$};
    \draw (1,-2) node[right]{$L_2$};
    \draw (0,1) node[right]{$L_3$};
    
    \draw (-2,-1.7) node[left]{$p_1$};
    \draw (-2,0.3) node[left]{$\bar{p_1}$};
    \draw (0,-1.7) node[right]{$p_2$};`
    \draw (0,0.3) node[right]{$p_3$};`
    \node at (0,-2)[circle,fill,inner sep=1.5pt,color=green]{};
    \node at (-2,-2)[circle,fill,inner sep=1.5pt,color=red]{};
    \node at (0,0)[circle,fill,inner sep=1.5pt,color=blue]{};
    \node at (-2,0)[circle,fill,inner sep=1.5pt,color=red]{};
    
\end{tikzpicture}
\caption{A path from $(p_1,L_1,1)$ to $(p_1,L_1,2).$}\label{Kncon1}
\end{center}
\end{figure}

For the second claim, construct $L_2 = \big[ p_1,p_2 \big], L_3 = \big[ p_2,p_3 \big], L_1' = \big[p_3,p_1\big]$. Then, $$(p_1,L_1,1),(p_2,L_2,2),(p_3,L_3,3),(p_1,L_1',1),(\bar{p_1},L_1,2)$$ is an incidence chain in $\Delta(\Gamma)$ (see Figure~\ref{Kncon2}). By then using the first claim two times we get an incidence chain from $(p_1,L_1,1)$ to $(\bar{p_1},L_1,1)$.

\begin{figure}
\begin{center}
\begin{tikzpicture}
    \draw (-3,-3) node[left]{$L_1'$};
    
    \draw [red, very thick] (-3,-3) -- (1,1) ;
    \draw [red, very thick] (-2,-3) -- (-2,1) ;
    \draw [blue, very thick] (0,-3) -- (0,1) ;
    \draw [green, very thick] (-3,-2) -- (1,-2) ;
    \draw (-2,1) node[right]{$L_1$};
    \draw (1,-2) node[right]{$L_2$};
    \draw (0,1) node[right]{$L_3$};
    
    \draw (-2,-1.7) node[left]{$p_1$};
    \draw (-2,0) node[left]{$\bar{p_1}$};
    \draw (0,-1.7) node[right]{$p_2$};`
    \draw (0,-0.2) node[right]{$p_3$};`
    \node at (0,-2)[circle,fill,inner sep=1.5pt,color=green]{};
    \node at (-2,-2)[circle,fill,inner sep=1.5pt,color=red]{};
    \node at (0,0)[circle,fill,inner sep=1.5pt,color=blue]{};
    \node at (-2,0)[circle,fill,inner sep=1.5pt,color=green]{};
    
\end{tikzpicture}
\caption{A path from $(p_1,L_1,1)$ to $(p_1,L_1,2).$}\label{Kncon2}
\end{center}
\end{figure}
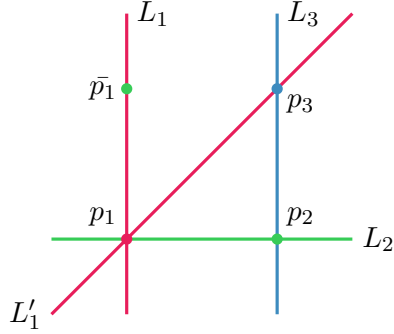

Using these two moves, it is readily seen that $\Delta(\Gamma)$ is connected.

\end{proof}

\begin{proof}[Proof of Theorem~\ref{classification}]Theorem~\ref{classification} is a summary of the results obtained in the Propositions and Corollary of this section.
\end{proof}
 \begin{corollary}\label{cor: infinite family}
     The family $\Delta(\AG(2,q))$ for $q > 3$, $q$ a power of a prime, is an infinite family of thick, residually connected and flag-transitive geometries with trialities but no dualities.
 \end{corollary}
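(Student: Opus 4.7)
The plan is to verify each of the four properties in the statement: three of them reduce immediately to earlier results, while thickness requires a short combinatorial count in $\AG(2,q)$. I would proceed as follows.

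First, since $q>3$ the affine plane $\AG(2,q)$ appears as item (2) in Theorem~\ref{classification}, so $\Delta(\AG(2,q))$ is firm, residually connected and flag-transitive; the canonical triality of Proposition~\ref{triality} supplies the required order-$3$ correlation. For the absence of dualities, the last assertion of Theorem~\ref{classification} says that $\Delta(\Gamma)$ has a duality if and only if $\Gamma$ does. Since $\AG(2,q)$ has $q^{2}$ points and $q(q+1)$ lines, and $q^{2}\neq q(q+1)$ for every $q\geq 1$, no bijection can swap the two types, so $\AG(2,q)$ is not self-dual and hence $\Delta(\AG(2,q))$ admits no duality.

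For thickness, I would exploit the canonical triality to reduce to checking a single rank-$2$ flag of a single cotype, say $\{(p_1,L_1,1),(p_2,L_2,2)\}$. By the incidence rule of Construction~\ref{construct}, $L_1\cap L_2=\{p_1\}$ and $p_2\in L_2\setminus\{p_1\}$, and in particular $p_2\notin L_1$. A completion $(p_3,L_3,3)$ amounts to a line $L_3\ni p_2$ with $L_3\neq L_2$ and $L_3\cap L_1=\{p_3\}$ for some $p_3\neq p_1$. Of the $q+1$ lines through $p_2$, one is $L_2$ itself and exactly one is the unique parallel to $L_1$ through $p_2$; each of the remaining $q-1$ lines meets $L_1$ in a single point $p_3$, and $p_3\neq p_1$ because any line through both $p_1$ and $p_2$ must coincide with $L_2$. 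Hence the flag has exactly $q-1$ completions, which is at least $3$ as soon as $q>3$, yielding thickness.

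The only step requiring actual work is this thickness count; the other three properties are immediate consequences of the structure theorems already established, and since the count is a direct application of standard parallel-class enumeration in $\AG(2,q)$, I do not foresee any substantial obstacle.
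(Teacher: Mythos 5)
Your proposal is correct and follows essentially the same route as the paper, whose proof simply cites Theorem~\ref{classification} (for firmness, residual connectedness, flag-transitivity and the criterion that $\Delta(\Gamma)$ has dualities iff $\Gamma$ does, with $\AG(2,q)$ not self-dual) together with Proposition~\ref{prop:dual}. The one point you supply that the paper leaves implicit in this corollary is the explicit thickness count of $q-1\geq 3$ completions of a rank-two flag; this count is correct and agrees with the vertex labels $q-2$ in the Buekenhout diagram of $\Delta(\AG(2,q))$ given later in the paper.
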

\begin{proof}
    This is a direct consequence of Theorem~\ref{classification} and Proposition~\ref{prop:dual}.    
\end{proof}

\section{Firm, residually connected and flag-transitive geometries with a triality}\label{sec:diagrams}
We now give the Buekenhout diagrams of the geometries appearing in Theorem~\ref{classification}.

For $\Gamma = \PG(2,2)$,  the diagram of $\Delta(\Gamma)$ can be computed either by hand or using {\sc Magma}~\cite{magma} and is as follows:

\begin{center}

\begin{tikzpicture}

\begin{scope}[every node/.style={thick}]
    \draw (0,0) circle (8pt);
    \draw (-2,-2) circle (8pt);
    \draw (2,-2) circle (8pt);
    
    \node (1) at (0,0) {$1$};
    \node (2) at (-2,-2) {$2$};
    \node (3) at (2,-2) {$3$};
    \node (11) at (0,1) {$1$};
    \node (12) at (0,0.5) {$21$};
    \node (21) at (-2,-2.5) {$1$};
    \node (22) at (-2,-3) {$21$};
    \node (31) at (2,-2.5) {$1$};
    \node (32) at (2,-3) {$21$};

    \node (4) at (-1.3,-0.7) {$4$};
    \node (5) at (1.3,-0.7) {$4$};
    \node (6) at (0,-2.3) {$4$};
\end{scope}

\begin{scope}[every node/.style={thick}]
    
    \path [-] (1) edge[color=black] (3);
    \path [-] (2) edge[color=black] (3);
    \path [-] (2) edge[color=black] (1);
\end{scope}

\end{tikzpicture}
\end{center}

For $\Gamma = \PG(2,q)$, with $q \geq 3$ the diagram of $\Delta(\Gamma)$ is computed thanks to Corollary~\ref{residuegdd}.

\begin{center}

\begin{tikzpicture}

\begin{scope}[every node/.style={thick}]
    \draw (0,0) circle (8pt);
    \draw (-2,-2) circle (8pt);
    \draw (2,-2) circle (8pt);
    
    \node (1) at (0,0) {$1$};
    \node (2) at (-2,-2) {$2$};
    \node (3) at (2,-2) {$3$};
    \node (11) at (0,1) {$q-1$};
    \node (12) at (0,0.5) {$(q^2+q+1)(q+1)$};
    \node (21) at (-2,-2.5) {$q-1$};
    \node (22) at (-2,-3) {$(q^2+q+1)(q+1)$};
    \node (31) at (2,-2.5) {$q-1$};
    \node (32) at (2,-3) {$(q^2+q+1)(q+1)$};

    \node (41) at (-1.2,-0.8) {$3$};
    \node (42) at (-1.6,-1.2) {$4$};
    \node (43) at (-0.8,-0.4) {$4$};
    \node (51) at (0.8,-0.4) {$4$};
    \node (52) at (1.2,-0.8) {$3$};
    \node (53) at (1.6,-1.2) {$4$};
    \node (61) at (0,-2.4) {$3$};
    \node (62) at (-0.4,-2.4) {$4$};
    \node (63) at (0.4,-2.4) {$4$};

\end{scope}

\begin{scope}[every node/.style={thick}]
    
    \path [-] (1) edge[color=black] (3);
    \path [-] (2) edge[color=black] (3);
    \path [-] (2) edge[color=black] (1);
\end{scope}

\end{tikzpicture}
\end{center}

For $\Gamma = \AG(2,3)$, the diagram of $\Delta(\Gamma)$ can again be computed either by hand or using {\sc Magma}~\cite{magma}.

\begin{center}

\begin{tikzpicture}

\begin{scope}[every node/.style={thick}]
    
    \draw (0,0) circle (8pt);
    \draw (-2,-2) circle (8pt);
    \draw (2,-2) circle (8pt);
    
    \node (1) at (0,0) {$1$};
    \node (2) at (-2,-2) {$2$};
    \node (3) at (2,-2) {$3$};
    \node (11) at (0,1) {$1$};
    \node (12) at (0,0.5) {$36$};
    \node (21) at (-2,-2.5) {$1$};
    \node (22) at (-2,-3) {$36$};
    \node (31) at (2,-2.5) {$1$};
    \node (32) at (2,-3) {$36$};

    \node (4) at (-1.3,-0.7) {$6$};
    \node (5) at (1.3,-0.7) {$6$};
    \node (6) at (0,-2.3) {$6$};
\end{scope}

\begin{scope}[every node/.style={thick}]
    
    \path [-] (1) edge[color=black] (3);
    \path [-] (2) edge[color=black] (3);
    \path [-] (2) edge[color=black] (1);
\end{scope}

\end{tikzpicture}
\end{center}

For $\Gamma = \AG(2,q), q \geq 4$ the diagram of $\Delta(\Gamma)$ is the following thanks to Corollary~\ref{residuegdd}.

\begin{center}

\begin{tikzpicture}

\begin{scope}[every node/.style={thick}]
    \draw (0,0) circle (8pt);
    \draw (-2,-2) circle (8pt);
    \draw (2,-2) circle (8pt);
    
    \node (1) at (0,0) {$1$};
    \node (2) at (-2,-2) {$2$};
    \node (3) at (2,-2) {$3$};
    \node (11) at (0,1) {$q-2$};
    \node (12) at (0,0.5) {$q^2(q+1)$};
    \node (21) at (-2,-2.5) {$q-2$};
    \node (22) at (-2,-3) {$q^2(q+1)$};
    \node (31) at (2,-2.5) {$q-2$};
    \node (32) at (2,-3) {$q^2(q+1)$};

    \node (41) at (-1.2,-0.8) {$3$};
    \node (42) at (-1.6,-1.2) {$4$};
    \node (43) at (-0.8,-0.4) {$4$};
    \node (51) at (0.8,-0.4) {$4$};
    \node (52) at (1.2,-0.8) {$3$};
    \node (53) at (1.6,-1.2) {$4$};
    \node (61) at (0,-2.4) {$3$};
    \node (62) at (-0.4,-2.4) {$4$};
    \node (63) at (0.4,-2.4) {$4$};

\end{scope}

\begin{scope}[every node/.style={thick}]
    
    \path [-] (1) edge[color=black] (3);
    \path [-] (2) edge[color=black] (3);
    \path [-] (2) edge[color=black] (1);
\end{scope}

\end{tikzpicture}
\end{center}

For $\Gamma = \UH(2)$, as $\UH(2)\cong \AG(2,3)$, the diagram of $\Delta(\Gamma)$ is already given above.
For $\Gamma = \UH(4)$, the diagram of $\Delta(\Gamma)$ diagram can be computed thanks to {\sc Magma}. The rank 2 residues do not have dualities.

\begin{center}

\begin{tikzpicture}

\begin{scope}[every node/.style={thick}]

    \draw (0,0) circle (8pt);
    \draw (-2,-2) circle (8pt);
    \draw (2,-2) circle (8pt);

    \node (1) at (0,0) {$1$};
    \node (2) at (-2,-2) {$2$};
    \node (3) at (2,-2) {$3$};
    \node (11) at (0,1) {$3$};
    \node (12) at (0,0.5) {$1040$};
    \node (21) at (-2,-2.5) {$3$};
    \node (22) at (-2,-3) {$1040$};
    \node (31) at (2,-2.5) {$3$};
    \node (32) at (2,-3) {$1040$};

    \node (41) at (-1.2,-0.8) {$4$};
    \node (42) at (-1.6,-1.2) {$6$};
    \node (43) at (-0.8,-0.4) {$6$};
    \node (51) at (0.8,-0.4) {$6$};
    \node (52) at (1.2,-0.8) {$4$};
    \node (53) at (1.6,-1.2) {$6$};
    \node (61) at (0,-2.4) {$4$};
    \node (62) at (-0.4,-2.4) {$6$};
    \node (63) at (0.4,-2.4) {$6$};

\end{scope}

\begin{scope}[every node/.style={thick}]
    
    \path [-] (1) edge[color=black] (3);
    \path [-] (2) edge[color=black] (3);
    \path [-] (2) edge[color=black] (1);
\end{scope}

\end{tikzpicture}
\end{center}

We conclude this section with Table~\ref{table}. This table lists all the geometries $\Gamma$ we discussed and tells us whether $\Delta(\Gamma)$ is connected, residually connected or thin. It also contains the description of the automorphism group and correlation group of $\Delta(\Gamma)$.

\begin{center}
\begin{table}[ht]
\begin{tabular}{||c c c c c c||} 
 \hline
 $\Gamma$ & Connected & RC & Thin & $\Aut(\Delta(\Gamma))$ & $\Cor(\Delta(\Gamma))$ \\ [0.5ex] 
 \hline\hline
 $\AG(2,3)$ & Yes & Yes & Yes & $\AGL(2,3)$ & $\AGL(2,3) \times \Ct$ \\ 
 \hline
 $\AG(n,3)$, $n \geq 3$ & Yes & No & Yes & $\AGL(n,3)$ & $\AGL(n,3) \times  \Ct$ \\
 \hline
 $\AG(n,q)$, $n \geq 3, q \geq 4$ & Yes & No & No & $\AGL(n,q)$ & $\AGL(n,q) \times  \Ct$ \\
 \hline
 $\AG(2,q)$, $q \geq 4$ & Yes & Yes & No & $\AGL(2,q)$ & $\AGL(2,q) \times \Ct$ \\
 \hline
 $\PG(2,2)$ & Yes & Yes & Yes & $\PGammaL(3,2)$ & $\PGammaL(3,2) \rtimes \St$ \\ 
 \hline
 $\PG(2,q)$, $q \geq 3$ & Yes & Yes & No & $\PGammaL(3,q)$ & $\PGammaL(3,q) \rtimes \St$ \\
 \hline
 $\PG(n,q)$, $n \geq 3, q \geq 3$ & Yes & No & No & $\PGammaL(n+1,q)$ & $\PGammaL(n+1,q) \times \Ct$ \\
 \hline
 $\PG(3,2)$ & Yes & No & Yes & $\PGammaL(4,2)$ & $\PGammaL(4,2) \times \Ct$ \\
 \hline
 $\K3$ & No & Yes & No & $\Ss$ & $\Ss \times \St$ \\
 \hline
 $\Kn$, $n \geq 4$ & Yes & No & No & $\Sn$ & $\Sn \times \St$ \\
 \hline
 $\UH(4)$& Yes & Yes & No & $\PGammaU(3,4)$ & $\PGammaU(3,4) \times \Ct$ \\
 [0ex] 
 \hline

\end{tabular}
\caption{}
\label{table}
\end{table}

\end{center}

\section{Conclusion}\label{sec:conclusion}

In this paper, we described a geometric construction that permits to get, from a rank two geometry $\Gamma$, a rank three incidence system $\Delta(\Gamma)$ that admits trialities. 
The gonality of $\Gamma$ needs to be at most three as proven in Proposition~\ref{g3impliesgeom} in order to have that $\Delta(\Gamma)$ be a geometry.
We then focused on the case where the gonality is three and this led us naturally to investigate which linear spaces $\Gamma$ give interesting geometries $\Delta(\Gamma)$.
We thus determined which linear spaces $\Gamma$ give "interesting geometries" $\Delta(\Gamma)$, that is geometries that are firm, residually connected and flag-transitive.

We did not investigate the case where the gonality of $\Gamma$ is two. However, experiments suggest that $\Delta(\Gamma)$ will not be residually connected in that case. 

We did not find any examples of rank two geometries $\Gamma$ that have gonality three but are not linear spaces and that would give a $\Delta(\Gamma)$ that is firm, residually connected and flag-transitive.
The construction suggests however that there is no restriction on the diameter of $\Gamma$, but simply that elements at distance more than four of each other will not end up in common rank two residues.

\bibliographystyle{abbrv} 
\bibliography{refs}
\end{document}